\newfont{\cyr}{wncyr10 scaled 1100}
\newfont{\cyrr}{wncyr9 scaled 1000}
\theoremstyle{plain}
\newtheorem{theorem}{Theorem}[section]
\newtheorem{lemma}[theorem]{Lemma}
\theoremstyle{definition}
\newtheorem{conjecture}[theorem]{Conjecture}
\newtheorem{definition}[theorem]{Definition}
\theoremstyle{remark}
\newtheorem{remark}[theorem]{Remark}
\newtheorem*{acknowledgements}{Acknowledgements}
\newcommand{\Q}{\mathbb Q}
\newcommand{\N}{\mathbb N}
\newcommand{\Z}{\mathbb Z}
\newcommand{\R}{\mathbb R}
\newcommand{\C}{\mathbb C}
\newcommand{\A}{\mathbb A}
\DeclareMathOperator{\Pic}{Pic}
\DeclareMathOperator{\Emb}{Emb}
\DeclareMathOperator{\Hom}{Hom}
\DeclareMathOperator{\Gal}{Gal}
\DeclareMathOperator{\GL}{GL}
\DeclareMathOperator{\SL}{SL}
\DeclareMathOperator{\M}{M}
\DeclareMathOperator{\vol}{vol}
\newcommand{\ord}{\mathrm{ord}}
\newcommand{\bs}{\backslash}
\newcommand{\bmx}{\left( \begin{matrix}}
\newcommand{\emx}{\end{matrix} \right)}
\newcommand{\leg}{\overwithdelims ()}
\definecolor{Indigo}{rgb}{0.2,0.1,0.7}
\definecolor{Violet}{rgb}{0.5,0.1,0.7}
\definecolor{White}{rgb}{1,1,1}
\definecolor{Green}{rgb}{0.1,0.9,0.2}
\newcommand{\mat}[4]{\left(\begin{array}{cc}#1&#2\\#3&#4\end{array}\right)}
\newcommand{\smallmat}[4]{\bigl(\begin{smallmatrix}#1&#2\\#3&#4\end{smallmatrix}\bigr)}
\def \mint {\boldsymbol{\times} \hskip -1.138em \int}
\def \smint {\boldsymbol{\times} \hskip -0.96em \int}
\newcommand{\cO}{{\mathcal O}}
\newcommand{\E}{{\mathcal E}}
\begin{document}

\title[]{Rationality of Darmon points over genus fields of non-maximal orders}
\author{Matteo Longo, Kimball Martin and Yan Hu}
\thanks{}

\begin{abstract} Stark--Heegner points, also known as \emph{Darmon points}, 
were introduced by H. Darmon in \cite{Dar} 
as certain local points on rational elliptic curves, conjecturally defined over abelian extensions 
of real quadratic fields. The rationality conjecture for these points is only known in the unramified case, namely, when these points 
are specializations of global points defined over the strict Hilbert class field $H^+_F$ 
of the real quadratic field $F$ and twisted by (unramified) quadratic 
characters of $\Gal(H_c^+/F)$.  
We extend these results to the situation of 
ramified quadratic characters; more precisely, we show that
Darmon points of conductor $c\geq 1$ twisted by quadratic characters of $G_c^+=\Gal(H_c^+/F)$, where
$H_c^+$ is the strict ring class field of $F$ of conductor $c$, come from rational points on the elliptic curve defined over $H_c^+$. 

\bigskip 
Les points de Stark-Heegner, \'egalement connus sous le nom de \emph{points de Darmon},
ont \'et\'e introduits par H. Darmon dans \cite{Dar}
comme certains points locaux sur des courbes elliptiques rationnelles. 
Les points de Stark-Heegner sont conjecturalement d\'efinies sur des extensions ab\'eliennes
d'un corps quadratiques r\'eel. La conjecture de rationalit\'e pour ces points est connue dans le cas non ramifi\'e. Dans ce cas, ces points
sont des sp\'ecialisations de points globaux d\'efinies sur le corps de classes 
de Hilbert au sens strict $H ^+_ F$ du corps quadratique r\'eel $F$, 
et tordu par le caract\`eres quadratique (non ramifi\'e)
de $\Gal (H_c ^ + / F) $.
Nous \'etendons ces r\'esultats \`a la situation de
caract\`eres quadratiques ramifi\'es; plus pr\'ecis\'ement, nous montrons que
les points de Darmon de conducteur $ c \geq 1 $ tordus par des caract\`eres quadratiques de $ G_c ^ + = \Gal (H_c ^ + / F) $, o\`u
$ H_c ^ + $ est le corp de classe de $ F $ au sens strict du conducteur $ c $, provenant de points d\'efinie sur $ H_c ^ + $ de la courbe elliptique.
\end{abstract}

\address{Dipartimento di Matematica, Universit\`a di Padova, Via Trieste 63, 35121 Padova, Italy}
\email{mlongo@math.unipd.it}
\email{sthuyan@gmail.com}

\address{Department of Mathematics, University of Oklahoma, Norman, OK 73072
USA}
\email{kmartin@math.ou.edu}

\subjclass[2000]{}
\date{\today}
\keywords{}

\maketitle
\tableofcontents

\section{Introduction} 

The theory of \emph{Stark--Heegner points}, also known as \emph{Darmon points},
began with the foundational 
paper by H. Darmon \cite{Dar} in 2001. In this work, Darmon proposed a construction of local points on rational 
elliptic curves, the Stark--Heegner points, which, under appropriate arithmetic conditions, he conjectured to be global points defined over strict ring class fields of \emph{real quadratic fields}, which are non-torsion when the central critical value of the 
first derivative of the complex $L$-function of the elliptic curve over the real quadratic field does not vanish. 
Note that the absence of a theory of complex multiplication in the real quadratic case, available in the  imaginary quadratic case, makes the construction of 
global points on elliptic curves over real quadratic fields and their abelian extensions a rather challenging problem. 
The idea of Darmon was to define locally a family of candidates for their points, and conjecture that these come from global points. Following \cite{Dar}, many authors proposed similar constructions in different situations, including the cases of modular and Shimura curves, and the higher weight analogue of Stark--Heegner, or 
Darmon, cycles; 
with no attempt to be complete, see for instance \cite{Das}, \cite{MG}, \cite{LRV1}, \cite{LRV}, \cite{Trif}, \cite{GS}, 
\cite{RS}, \cite{GM1}, \cite{GM2}, \cite{GM3}, \cite{GM4}, and \cite{GM5}.   
The relation between Stark--Heegner points and half-integral weight modular forms is studied 
in \cite{DT}, \cite{LZ}, \cite{LN}, \cite{Guhan}. 

The arithmetic setting of the original construction in \cite{Dar} is the following. Fix a
rational elliptic curve 
$E$ of conductor $N=Mp$, with $p\nmid M$ an odd prime number and $M \geq1$ an integer. 
Fix also a 
real quadratic field $F$ satisfying the following \emph{Stark--Heegner assumption}: all primes $\ell\mid M$ 
are split in $F$, while $p$ is inert in $F$.
Under these assumptions, the central critical value $L(E/F,1)$ 
of the complex $L$-function of $E$ over $F$ vanishes. Darmon points 
are local points $P_c$ for $E$ defined of finite extension of $F_p$, the completion of $F$ 
at the unique prime above $p$; their definition and the main properties are recalled in Section \ref{sec1} below. 
The definition of these points depends on the choice of an auxiliary integer $c\geq1$, 
called the \emph{conductor} of a Darmon point $P_c$.  The rationality conjecture 
predicts that these points $P_c$ are localizations of global points $\mathbf{P}_c$ which 
are defined over the strict ring class field $H_c^+$ of $F$ of conductor $c$.

Only partial results are known toward the rationality conjectures for Darmon points, or more generally cycles. 
The first result on the rationality of Darmon points 
is due to Bertolini and Darmon in the paper \cite{BD-Rationality}, where they show that 
a certain linear combination of these points with coefficients given by values of genus characters 
of the real quadratic field $F$ comes from a global point defined over the Hilbert class field of $F$.  
The main idea behind the proof of these results is to use a factorization formula for $p$-adic 
$L$-functions to compare the localization of Heegner points and Darmon points.  
More precisely, the first step of the proof consists in relating Darmon points 
to the $p$-adic $L$-function interpolating central critical values of the complex  
$L$-functions over $F$ attached to the arithmetic specializations of the Hida family 
passing through the modular form attached to $E$.  The second step consists in expressing 
this $p$-adic $L$-function
in terms of a product of two Mazur--Kitagawa $p$-adic $L$-functions, which are known to be 
related to Heegner points by the main result of \cite{BD-HidaFamilies}. 
A similar strategy has been adopted by \cite{GSS}, \cite{S} \cite{LV}, \cite{LV1} obtaining similar results. 

All known results in the direction of the conjectures in \cite{Dar} involve linear combination of Darmon
points twisted by genus characters, which are 
quadratic unramified characters of $\Gal(H^+_F/F)$, where $H_F^+$ is the 
(strict) Hilbert class field of $F$.  
The goal of this paper is to prove a similar rationality result for more general quadratic characters, 
namely, quadratic characters of ring class fields of $F$, so we allow for ramification. 
In the remaining part of the introduction we briefly state our main result
and the main differences with the case of genus characters treated up to now. 

Let $E/\Q$ be an elliptic curve, and denote by $N$ its conductor. 
Let $F/\Q$ be a real quadratic field $F=\Q(\sqrt{D})$ 
of discriminant $D=D_F>0$, prime to $N$. 
We assume 
one has a factorization 
$N=Mp$ with $p\nmid M$, such that  
all primes $\ell\mid M$ are split in $F$ and $p$ is inert in $F$. 

Fix an integer $c$ prime to $D\cdot N$ 
and a quadratic character \[\chi:G_c^+=\Gal(H_c^+/F)\longrightarrow\{\pm1\},\] where, as above, 
$H_c^+$ denotes the strict class field of $F$ of conductor $c$.  Let $\mathcal O_c$
be the order in $F$ of conductor $c$.  Recall that 
$G_c^+$ is isomorphic to the group of strict equivalence classes of projective 
$\mathcal{O}_c^+$-modules, which we denote $\Pic^+(\mathcal{O}_c)$, 
where two such modules are 
strictly equivalent if they are the same up to an element of $F$ of positive norm. 
We assume that $\chi$ is \emph{primitive}, meaning that it does not factor through any 
$G_f^+$ with $f$ a proper divisor of $c$. 

Fix  embeddings $F\hookrightarrow\bar\Q$ 
and $\bar\Q\hookrightarrow\bar\Q_p$ throughout. Let $P_c\in E(F_p)$ be a Darmon point 
of conductor $c$ (see Section \ref{sec1} below for the precise definition of these points) 
where, as above, $F_p$ is the completion of $F$ at the unique prime of $F$ above $p$. 
It follows from their construction that Darmon points of conductor $c$ 
are in bijection with equivalence classes of quadratic forms of discriminant $Dc^2$, and this can 
be used to \emph{define} a Galois action $P_c\mapsto P_c^\sigma$ on  Darmon points,
where $P_c$ is a fixed Darmon point of conductor $c$ and $\sigma\in G_c^+$. 
We may then form the point 
\begin{equation} \label{Pchi}
P_\chi=\sum_{\sigma\in G_c^+} \chi^{-1}(\sigma)P_c^\sigma
\end{equation}
which lives in $E(F_p)$. 
Finally, let $\log_E : E(\C_p)\rightarrow \C_p$ denote the formal group logarithm of $E$. 
Note that, since $p$ is inert in $F$, it splits completely in $H_c^+$, and therefore 
for any point $Q\in E(H_c^+)$ the localization of $Q$ at any of the primes in $H_c^+$ 
dividing $p$ lives in $E(F_p)$. 
Our main result is the following: 

\begin{theorem} Assume that 
$c$ is is odd and coprime to $DN$. Let $\chi$ be a primitive quadratic character of $G_c^+$. 
Then 
there exists a point $\mathbf{P}_\chi$ in $E(H_c^+)$  
and a rational number $n\in\Q^\times$ such that
\[\log_E(P_\chi)=n\cdot\log_E(\mathbf{P}_\chi).\] 
Moreover, the point $\mathbf{P}_\chi$ is of infinite order 
if and only if $L'(E/F,\chi,1)\neq 0$. 
\end{theorem}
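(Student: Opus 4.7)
The plan is to adapt the Bertolini--Darmon strategy of \cite{BD-Rationality} from unramified genus characters of $\Gal(H_F^+/F)$ to a primitive quadratic character $\chi$ of the ring class group $G_c^+$. The core idea is to express $\log_E(P_\chi)$ as a leading coefficient of a two-variable $p$-adic $L$-function $L_p(\mathbf{f}/F,\chi)(k,s)$ interpolating the central values $L(f_k/F,\chi,s)$ along a Hida family $\mathbf{f}$ through the newform attached to $E$, factor this $p$-adic $L$-function into a product of two Mazur--Kitagawa $p$-adic $L$-functions via a genus-theoretic decomposition of $\chi$ suited to the non-maximal order $\mathcal{O}_c$, and then invoke the main theorem of \cite{BD-HidaFamilies} to identify the derivative of the vanishing Mazur--Kitagawa factor with the formal group logarithm of a classical Heegner point.

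More concretely, first I would use the bijection between Darmon points of conductor $c$ and strict equivalence classes of binary quadratic forms of discriminant $Dc^2$ to package $P_\chi$ as a $\chi$-projection of multiplicative integrals associated with $\mathcal{O}_c$-optimal embeddings, and then adapt the computation of \cite{BD-Rationality} to obtain an identity of the form
\[\log_E(P_\chi) \;=\; \kappa \cdot \partial_k \partial_s\, L_p(\mathbf{f}/F,\chi)(k,s)\big|_{(k,s)=(2,1)}\]
for an explicit nonzero constant $\kappa$, the sign of the functional equation forcing a double zero at $(2,1)$. The hypothesis that $c$ is odd and coprime to $DN$ ensures that the relevant local root numbers behave as in the Hilbert class field case and that the automorphic test vectors can be chosen compatibly with the Eichler order of level $M$ together with the order $\mathcal{O}_c$ at the auxiliary primes dividing $c$.

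The central new input is the factorization formula. Genus theory for the non-maximal order $\mathcal{O}_c$ associates to the primitive quadratic character $\chi$ a unique pair of primitive Dirichlet characters $(\chi_1,\chi_2)$ with $\chi_1\chi_2 = \chi_D$ and conductors whose product equals $Dc^2$. I would establish an identity
\[L_p(\mathbf{f}/F,\chi)(k,s) \;=\; C(k,s)\cdot L_p(\mathbf{f},\chi_1)(k,s)\cdot L_p(\mathbf{f},\chi_2)(k,s),\]
where $C(k,s)$ is a product of explicit local Euler-type factors at primes dividing $c$ with $C(2,1)\in\Q^\times$. I expect this step to be the principal obstacle: the unramified case reduces to a classical Rankin--Selberg / Kronecker identity, but the ramified case requires a careful computation of local zeta integrals and of periods for $\mathcal{O}_c$-optimal embeddings, together with a verification that the aggregated local corrections are rational and nonzero. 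Once the factorization is in hand, the functional-equation sign forces exactly one factor, say $L_p(\mathbf{f},\chi_1)$, to vanish at $(2,1)$; by \cite{BD-HidaFamilies} its $k$-derivative equals $\log_E$ of a classical Heegner point attached to the imaginary quadratic field cut out by $\chi_1$, while the other factor contributes a nonzero algebraic central value identified by a Waldspurger-type formula with a rational square of a period. Assembling the Heegner data via Shimura reciprocity produces a global point $\mathbf{P}_\chi \in E(H_c^+)$ and a rational $n$ with $\log_E(P_\chi) = n\cdot \log_E(\mathbf{P}_\chi)$.

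Finally, the infinite-order assertion follows from Gross--Zagier--Zhang: the canonical height of the Heegner point underlying $\mathbf{P}_\chi$ is a nonzero rational multiple of $L'(f,\chi_1,1)\cdot L(f,\chi_2,1)$, which by the classical factorization $L(E/F,\chi,s) = L(f,\chi_1,s)L(f,\chi_2,s)$ coincides, up to a nonzero rational factor, with $L'(E/F,\chi,1)$; hence $\mathbf{P}_\chi$ is of infinite order precisely when $L'(E/F,\chi,1)\neq 0$.
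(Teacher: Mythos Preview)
Your strategy matches the paper's: genus theory for $\mathcal{O}_c$ decomposes $\chi$ into two quadratic Dirichlet characters, the $p$-adic $L$-function over $F$ factors into a product of Mazur--Kitagawa $p$-adic $L$-functions, and the second derivative of the vanishing factor is identified with $\log_E^2$ of a Heegner point via \cite{BD-HidaFamilies} (as extended by Mok). Two implementation points differ from your sketch. First, the paper works throughout with a \emph{one}-variable $p$-adic $L$-function $L_p(f_\infty/F,\chi,\kappa)$ in the weight variable along the central line, defined as the square of a ``square-root'' function $\mathcal{L}_p$; the relevant identity is $\frac{d}{d\kappa}\mathcal{L}_p|_{\kappa=2}=\tfrac12(1-\chi_{D_1d}(-M)w_M)\log_E(P_\chi)$, so that $\frac{d^2}{d\kappa^2}L_p|_{\kappa=2}$ equals $2\log_E^2(P_\chi)$ rather than a mixed partial giving $\log_E(P_\chi)$. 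Second, and more substantively, the factorization is not obtained by the direct local zeta-integral computation you anticipate. Instead the paper first establishes the interpolation of $L_p(f_\infty/F,\chi,k)$ at classical weights $k\ge 2$ by recasting the Martin--Whitehouse central-value formula \cite{MW} (which, unlike Popa's formula, accommodates ramified $\chi$) in terms of cycle integrals over $\mathcal{O}_c$-optimal embeddings; it then invokes the classical Artin factorization $L(f_k^\sharp/F,\chi,s)=L(f_k^\sharp,\chi_{D_1d},s)\,L(f_k^\sharp,\chi_{D_2d},s)$ together with the known interpolation of the Mazur--Kitagawa factors, and concludes by density of classical weights in $U$. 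This route isolates the genuinely new work---the special-value formula in the ramified setting---and sidesteps the harder direct local computation you envisage.
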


If $c=1$, this is essentially the main result of \cite{BD-Rationality}.
To be more precise, the work \cite{BD-Rationality} needed to assume $E$ had two primes of
multiplicative reduction because of this assumption in \cite{BD-HidaFamilies}.
However, this assumption has been removed by very recent work of Mok \cite{Mok1},
which we also apply here.
 
The proof in the general case follows a similar line to that in \cite{BD-Rationality}.
However, some modifications are in order. The first difference is that the 
genus theory of non-maximal orders is more complicated than the usual genus 
theory, and the arguments need to be adapted accordingly. More 
importantly, one of the main ingredients in the proof of 
the rationality result in \cite{BD-Rationality} is a formula of Popa 
\cite{Popa} for the central critical value of the $L$-function 
over $F$ of the specializations at arithmetic points of the Hida family passing through the modular form associated with the elliptic curve $E$. 
However, this formula does not allow to treat $L$-functions twisted by ramified characters.
Instead, we recast an $L$-value formula from \cite{MW} which allows for ramification,
expressed in terms of periods of Gross--Prasad test vectors, in a more classical
framework to get our result.

\begin{remark} Let $\pi_f$ and $\pi_\chi$ be the automorphic representations of $\GL_2(\mathbb{A}_\Q)$ attached to $f$ and $\chi$, respectively, 
where $\mathbb{A}_\Q$ is the adele ring of $\Q$. 
When $(cD, N)\neq 1$,
it may be that $\pi_f$ and $\pi_\chi$ have joint ramification.
In this case, we can instead use \cite{FMP} in lieu of \cite{MW}, 
at least in the case that $f$ has squarefree level.
\end{remark} 

\begin{remark}
The main result of this paper assumes that all primes dividing $M$ are split
and $p$ is inert in $F$. More generally, the same results are expected to hold 
under the following \emph{relaxed modified Heegner assumption}: 
$p$ is inert in $F$ and there is a factorization $M=M^+\cdot M^-$ 
of $M$ into coprime integers such that a prime number $\ell\mid M^+$ if and only if $\ell$ in 
split in $F$, and $M^-$ is a product of an even number of distinct primes. 
If the conductor $N$ of the elliptic curve $E$ can be factorized as $N=M^+\cdot M^-\cdot p$ 
with $p\nmid M$ and the discriminant $D$ of the real quadratic field $F$, 
and $\chi$ is a primitive quadratic character of $G_c^+$ with $c$ odd and coprime with 
$ND$, then one can show that there exists a point $\mathbf{P}_\chi$ in $E(H_c^+)$  
and a rational number $n\in\Q^\times$ such that
$\log_E(P_\chi)=n\cdot\log_E(\mathbf{P}_\chi)$; 
moreover, the point $\mathbf{P}_\chi$ is of infinite order 
if and only if $L'(E/F,\chi,1)\neq 0$. 
The proof of this result can be obtained with the methods of this 
paper by replacing modular curves of level $M$ with Shimura curves 
of  level $M^+$ attached to quaternion algebras of discriminant $M^-$, 
following what is done in \cite{LV} in the case $c=1$. However, since the notation 
in the quaternionic case is quite different from the notation in the case of modular curves, 
we prefer to only treat in detail the case when $M^-=1$. Details in the case $M^->1$ 
are left to the reader. 
\end{remark} 

\begin{remark} 
The main result of this paper plays a role in the forthcoming works 
by Darmon-Rotger \cite{DR} and Bertolini-Seveso-Venerucci \cite{BSV} 
proving the rationality of Darmon points (actually, cohomology classes closely related 
to Darmon points) in situations which go far beyond the case of quadratic 
characters considered in this paper. This application 
was one of the main motivations for this work. 
\end{remark}

\begin{acknowledgements} 
We are grateful to H. Darmon for suggesting the 
problem and V. Rotger
for many interesting discussions about the topics of this paper.
We also thank C.P. Mok for providing us with a preliminary version of
\cite{Mok1}.
M.L. is supported by PRIN  2015, INdAM--GNASGA, BIRD 2017. 
K.M. was supported by a grant from the Simons 
Foundation/SFARI (512927, KM).
\end{acknowledgements}

\section{Darmon points} \label{sec1}

Let the notation be as in the introduction: $E/\Q$ is an elliptic curve of conductor $N=Mp$ 
with $p\nmid M$, and $F/\Q$ is a real quadratic field of discriminant $D=D_F$ such that 
all primes dividing $M$ are split in $F$ and $p$ is inert in $F$. Finally, $c$ is a positive 
integer prime to $ND$.  
The aim of this section is to review the definition of Darmon points and 
some results in \cite{BD-Rationality} and \cite{BD-HidaFamilies}. 

We first set up some standard notation. For any field $L$, let $P_{k-2}(L)$ be the space of 
homogeneous polynomials in $2$ variables of degree $k-2$, and let 
$V_{k-2}(L)$ be its $L$-linear dual. We let $\gamma=\smallmat abcd\in\GL_2(L)$ 
act from the right on $P(x,y)\in P_{k-2}(L)$ by the formula \[(P|\gamma)(x,y)=P(ax+by,cx+dy)\]
and we equip $V_{k-2}(L)$ with the dual action.  
 If $G$ is any abelian group, let $\mathrm{MS}(G)$ 
be the group of $G$-valued modular symbols, {i.e.}\ the $\Z$-module 
of functions $I:\mathbb{P}^1(\Q)\times\mathbb{P}^1(\Q)\rightarrow G$ such that 
$I(x,y)+I(y,z)=I(x,z)$ for all $x,y,z\in\mathbb{P}^1(\Q)$. The value $I(r,s)$ of $I\in \mathrm{MS}(G)$ 
on $(x,y)$ will be usually denoted $I\{x\rightarrow y\}$. 
The group $\GL_2(\Q)$ 
acts from the left by fractional linear transformations on $\mathbb{P}^1(\Q)$, 
and if $G$ is equipped with a left $\mathbb{P}^1(\Q)$-action, then 
$\mathrm{MS}(G)$ inherits a right $\GL_2(\Q)$-action by the rule $(I|\gamma)(x,y)=
\gamma\cdot I(\gamma^{-1}x,\gamma^{-1}y)$. If $\Gamma_0$ is a subgroup of 
$\GL_2(L)$, we denote $\mathrm{MS}_{\Gamma_0}(G)$ the subgroup of those 
elements in $\mathrm{MS}(G)$ which are invariant under the action of $\gamma$ 
for all $\gamma\in\Gamma_0$. If $f$ is a cuspform of level $\Gamma_0(M)$ and weight $k$, 
we may attach to $f$ the standard modular symbol 
$\tilde I_{f}\in\mathrm{MS}_{\Gamma_0(M)}(V_{k-2}(\C))$; 
explicitly, for 
$r,s\in\mathbb{P}^1(\Q)$ and $P(x,y)\in P_{k-2}(\C)$ an homogenous polynomial 
of degree $k-2$, put 
\[\tilde I_{f}\{r\rightarrow s\}(P(x,y))=2\pi i \int_r^s f(z)P(z,1) \, dz.\]  
The matrix $\omega_\infty=\smallmat 100{-1}$ acts on the group of modular symbols 
$\mathrm{MS}_{\Gamma_0(M)}(V_{k-2}(\C))$, 
and we let $\tilde I_f^\pm$ denote the projections to the $\pm$-eigenspaces for this action. 
Suppose that $f$ is a normalized eigenform
and let $K_{f}$ be the field generated over $\Q$ by the Fourier coefficients of 
$f$. Then there are complex periods $\Omega_{f}^\pm$ 
for each choice of sign $\pm$ 
such that their product equals the Petersson inner product $\langle f,f\rangle$, and  
$I_{f}^\pm:=\tilde I_f^\pm/\Omega_{f}^\pm$ satisfies the condition that 
if $P(x,y)\in P_{k-2}(K_{f})$ then $I_{f}\{r\rightarrow s\}(P(x,y))$ belongs to 
$K_{f}$. 

\subsection{Measure-valued modular symbols and Darmon points}
Let $f$ be the newform of level $N$ attached to $E$ by modularity. 
Denote $B=\M_2(\Q)$ the split quaternion algebra over $\Q$ 
and let $R$ be the $\Z[1/p]$-order in $B$ consisting of matrices in $\M_2(\Z[1/p])$ 
which are upper triangular modulo $M$. Define 
\[\Gamma=\{\gamma\in R^\times \ | \ \det(\gamma)=1\}.\] 
Let $\mathrm{Meas}^0(\mathbb{P}^1(\Q_p),\Z)$ denote the $\Z$-module 
of $\Z$-valued measures on $\mathbb{P}^1(\Q_p)$ with total 
measure equal to $0$. 
By \cite[Proposition 1.3]{BD-Rationality}, for each choice of sign 
$\pm$, there exists a unique
function, which we call the \emph{measure-valued modular symbol} 
attached to $f$, 
\[\mu_f^\pm:\mathbb{P}^1(\Q)\times\mathbb{P}^1(\Q)\longrightarrow
\mathrm{Meas}^0\left(\mathbb{P}^1(\Q_p),\Z\right)\]
denoted $(r,s)\mapsto \mu_f\{r\rightarrow s\}$, satisfying the 
following conditions: 
\begin{enumerate}
\item $\mu_f^\pm\{r\rightarrow s\}(\Z_p)=I_f^\pm\{r\rightarrow s\}$ 
\item For all $\gamma\in\Gamma$ and all open compact subsets $U\subseteq\mathbb{P}^1(\Q_p)$, 
\[\mu_f\{\gamma(r)\rightarrow\gamma(s)\}(U)=\mu_f\{r\rightarrow s\}(U),\] 
where we let $\GL_2(\Q_p)$ act on $\mathbb{P}^1(\Q_p)$ 
by fractional linear transformations. 
\end{enumerate} 

Let $\mathcal{H}_p=\C_p\setminus\Q_p$ denote the $p$-adic upper half plane.
The system of measures $\mu_f$ can be used to define, 
for any $r,s\in\mathbb{P}^1(\Q)$ and $\tau_1,\tau_2\in\mathcal{H}_p$, 
a \emph{double multiplicative integral}
\[\mint_{\tau_1}^{\tau_2}\int_r^s \omega_f:= \mint_{\mathbb{P}^1(\Q_p)}
\frac{t-\tau_2}{t-\tau_1} \, d\mu_f\{r\rightarrow s\}(t).
 \] 

 (On the right, the notation $\smint$ refers to the fact that the integration 
 is relative to the multiplicative structure of $\C_p^\times$, and therefore 
 is a limit of Riemann products instead of Riemann sums.) Let $q$ be the Tate period of $E$ at $p$, and let $\log_q$ be the branch of the $p$-adic logarithm 
satisfying $\log_q(q)=0$. Define the additive version of the double 
multiplicative integral to be 
\[\int_{\tau_1}^{\tau_2}\int_r^s \omega_f:=
\log_q\left(\mint_{\tau_1}^{\tau_2}\int_r^s \omega_f\right).\]

We finally introduce the notion of \emph{indefinite integral}. 
By \cite[Proposition 1.5]{BD-Rationality}, there exists a unique 
function from $\mathcal{H}_p\times\mathbb{P}^1(\Q)\times\mathbb{P}^1(\Q)$ 
to $\C$, denoted $(\tau,r,s)\rightarrow\int^\tau\int_r^s\omega_f$, satisfying the 
following conditions: 
\begin{enumerate}
\item The integral is $\Gamma$-invariant, in the sense that  
for all $\gamma\in\Gamma$, we have 
\[\int^{\gamma(\tau)}\int_{\gamma(r)}^{\gamma(s)}\omega_f=\int^\tau\int_r^s\omega_f\]
\item For any pair $\tau_1,\tau_2\in\mathcal{H}_p$, we have 
\[\int^{\tau_2}\int_r^s\omega_f-\int^{\tau_1}\int_r^s\omega_f=\int_{\tau_1}^{\tau_2}
\int_r^s\omega_f;\] 
\item For all $r,s,t$ in $\mathbb{P}^1(\Q)$ we have 
\[\int^\tau\int_r^s\omega_f+\int^\tau\int_s^t\omega_f=\int^\tau\int_r^t\omega_f.\] 
\end{enumerate}

We now define 
Darmon points using indefinite integrals above. 
Since $p$ is inert in $F$, the set $\mathcal{H}_p\cap F$ is not empty and one may define the 
order $\mathcal O_\tau$ associated with $\tau\in \mathcal{H}_p\cap F$ as 
\[\mathcal{O}_\tau=\left\{\mat abcd \in R \ | \ a\tau+b=c\tau^2+d\tau\right\}.\]
The map $\smallmat abcd\rightarrow c\tau+d$ induces an embedding $\mathcal{O}_\tau\hookrightarrow F$, 
and thus $\mathcal{O}_\tau$ may be viewed as an order in $F$. For $\tau\in \mathcal{H}_p\cap F$, let $\gamma_\tau=\smallmat abcd$ 
denote the unique generator of the stabilizer of $\tau$ in $\Gamma$ satisfying $c\tau+d>1$ 
(with respect to the chosen embedding $F\subseteq\bar\Q$). Let 
$\Phi_\mathrm{Tate}:\C_p^\times/q^\Z\rightarrow E(\C_p)$ denote the 
Tate uniformization of $E$ at $p$. Attached to $\tau$, 
there is an indefinite integral 
\begin{equation}\label{def:J}
J_\tau^\times=\mint^\tau\int_r^{\gamma_\tau(r)}\omega_f\in\C_p\end{equation}
where $r\in\mathbb{P}^1(\Q_p)$ is any base point, and one 
can show that $\Phi_\mathrm{Tate}(J_\tau^\times)$ is a well-defined point in $E(\C_p)$  
independently of the choice of $r$, up to its torsion subgroup 
$E(\C_p)_\mathrm{tors}$.  
\begin{definition}\label{def:Darmon} Let $\tau\in\mathcal{H}_p\cap F$. 
The point $P_\tau=\Phi_\mathrm{Tate}(J_\tau^\times)\in E(\C_p)\otimes_\Z\Q$,  
with $J_\tau^\times$ defined in \eqref{def:J}, 
is the \emph{Darmon point} attached to $\tau$.\end{definition}  

\subsection{Shimura reciprocity law}  
Fix an integer $c$ prime to $D\cdot N$, and let $\mathcal{O}_c$ be the order of 
$F$ of conductor $c$.  Denote $\mathcal Q_{Dc^2}$ the set  
of primitive binary quadratic forms of discriminant $Dc^2$.
Let $\SL_2(\Z)$ act from the right on the set $\mathcal{Q}_{Dc^2}$ 
via the formula 
\begin{equation}\label{action}
(Q|\gamma)(x,y)=Q(ax+by,cx+dy)
\end{equation} for $Q\in \mathcal{Q}_{Dc^2}$ and 
$\gamma=\smallmat abcd$. The set of equivalence classes $\mathcal{Q}_{Dc^2}/\SL_2(\Z)$
is equipped with a group structure given by the Gaussian composition law. 
If $H_c^+$ is the strict ring class field of $F$ of conductor $c$, then its Galois group  
$G_c^+=\Gal(H_c^+/F)$ is isomorphic to the group 
$\mathcal{Q}_{Dc^2}/\SL_2(\Z)$ via global class field theory (see \cite[Theorem 14.19]{cohn}). 

Fix $\delta\in\Z$ such that $\delta^2\equiv D\pmod{4M}$. 
Let $\mathcal{F}_{Dc^2}$ denote the subset of $\mathcal{Q}_{Dc^2}$ consisting 
of forms $Q(x,y)=Ax^2+Bxy+Cy^2$ such that $M\mid A$ and $B\equiv\delta\pmod{2M}$.
The group $\Gamma_0(M)$ acts on $\mathcal{F}_{Dc^2}$ by the formula \eqref{action}.  
Since $(M,D)=1$, we also have $(\delta,M)=1$, and therefore, by \cite[Proposition, p.\ 505]{GKZ}, 
the map $Q\mapsto Q$ sets up a bijection between $\mathcal{F}_{Dc^2}/\Gamma_0(M)$ 
and $\mathcal{Q}_{Dc^2}/\SL_2(\Z)$. In particular, 
the set $\mathcal{F}_{Dc^2}/\Gamma_0(M)$ is equipped with a 
structure of principal homogeneous space under $G_c^+$. If 
$Q\in \mathcal{F}_{Dc^2}/\Gamma_0(M)$ and $\sigma\in G_c^+$, we 
denote $Q^\sigma$ for the image of $Q$ by $\sigma$. 

Define 
\[\mathcal{H}_p^{(Dc^2)}=\{\tau\in\mathcal{H}_p\cap F \ | \ \mathcal{O}_\tau=\mathcal{O}_c\}.\] 
Given $Q(x,y) =Ax^2+Bxy+Cy^2$ a quadratic form in $\mathcal{F}_{Dc^2}$, let $\tau_Q=\frac{-B+c\sqrt{D}}{2A}$ 
be a fixed root of the quadratic polynomial $Q(x,1)$. Then $\tau_Q$ belongs to 
$\mathcal{H}_p^{(Dc^2)}$ (via the fixed $p$-adic embedding of $F$ into $\bar\Q_p$) 
and its image in $\Gamma\backslash\mathcal{H}_p^{(Dc^2)}$ does not 
depend on the $\Gamma_0(M)$-equivalence class of $Q$. Given $\sigma\in G_c^+$, 
we will sometimes write $\tau_{Q}^\sigma$ for $\tau_{Q^\sigma}$.

Let $P$ be a point in $E(H_c^+)$. Since $p$ is inert in $K$, it splits completely in $H_c^+$, 
and therefore, after fixing a prime of $H_c^+$ above $p$, the point $P$ 
localizes to a point in $E(F_p)$, where $F_p$ is the completion of $F$ 
at the unique prime above $p$. 

\begin{conjecture}
The Darmon point $P_{\tau_Q}$ is the localization of 
a global point $P_c$, defined over $H_c^+$, and the Galois action 
on this point is described by the following Shimura reciprocity law: 
if $P_c\in E(H_c^+)$ localizes to $P_{\tau_Q}\in E(F_p)$ then 
$P_c^\sigma$ localizes to $P_{\tau_Q^\sigma}$.
\end{conjecture}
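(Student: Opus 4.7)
My plan is to approach the conjecture in two stages: first establish rationality of the Darmon point $P_{\tau_Q}$ itself by assembling sufficiently many character projections, then read off the Shimura reciprocity law from the Galois equivariance already built into the combinatorial action of $G_c^+$ on quadratic forms.

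For the rationality, extend the definition of $P_\chi$ in \eqref{Pchi} to every (not necessarily quadratic) character $\chi\colon G_c^+\to\bar\Q^\times$, obtaining elements of $E(F_p)\otimes_\Z\bar\Q$. Fourier inversion on $G_c^+$ then yields
\[ |G_c^+|\cdot P_{\tau_Q}^\sigma \;=\; \sum_{\chi\in\widehat{G_c^+}}\chi(\sigma)\,P_\chi \qquad(\sigma\in G_c^+). \]
If one can, for every $\chi$, produce a global point $\mathbf{P}_\chi\in E(H_c^+)\otimes\bar\Q$ and a scalar $n_\chi\in\bar\Q^\times$ with $\log_E(P_\chi)=n_\chi\log_E(\mathbf{P}_\chi)$, then the candidate
\[ \mathbf{P}_c \;:=\; \frac{1}{|G_c^+|}\sum_{\chi}n_\chi\,\mathbf{P}_\chi \;\in\; E(H_c^+)\otimes\bar\Q \]
localizes, on the level of $\log_E$, to $P_{\tau_Q}$, and pinning down its logarithm up to the finite kernel of $\log_E$ produces a genuine global point modulo torsion. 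The Shimura reciprocity law $\mathbf{P}_c^\sigma\leftrightarrow P_{\tau_Q^\sigma}$ is then automatic, because each $\mathbf{P}_\chi$ is $\chi$-isotypic and both sides of the claimed identification have matching $\chi$-components for every $\chi$.

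The paper's main theorem handles quadratic $\chi$. To extend to arbitrary $\chi$, I would mirror that strategy but with a more general $L$-value input. The Martin--Whitehouse formula from \cite{MW} that underlies Theorem 1.1 already expresses $L(f_k/F,\chi,k/2)$ as a square (up to explicit periods) of the period of a Gross--Prasad test vector against the theta series attached to $(F,\chi)$, and this formula holds for arbitrary ring class characters. Interpolating it across the Hida family through $f=f_2$ yields a two-variable $p$-adic $L$-function whose weight-$2$ restriction encodes $\log_E(P_\chi)^2$. For quadratic $\chi$, genus theory factors this two-variable object as a product of two Mazur--Kitagawa $p$-adic $L$-functions, which \cite{BD-HidaFamilies} and \cite{Mok1} identify with Heegner point logarithms. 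For general $\chi$ no such factorization exists, so one would instead need to recognize the two-variable $p$-adic $L$-function directly as the square of an anticyclotomic $p$-adic $L$-function of Bertolini--Darmon--Prasanna type attached to an auxiliary CM datum, and then invoke a corresponding $p$-adic Gross--Zagier formula to manufacture a global point defined over $H_c^+$.

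The principal obstacle is precisely the absence of CM points in the real quadratic world. In the quadratic-character case, genus theory rewrites a ring class character of $F$ as a pair of genus characters and pulls the problem back to classical Heegner points, where global algebraicity is known. No such shadow exists for a generic ring class character of a real quadratic field, and hence no known source of global points of infinite order over $H_c^+$ is available against which to match $p$-adic $L$-values one-to-one. Overcoming this barrier appears to require the higher-dimensional diagonal-cycle constructions of the forthcoming works of Darmon--Rotger \cite{DR} and Bertolini--Seveso--Venerucci \cite{BSV} alluded to in Remark 1.3; absent such an external source of algebraicity, the statement must remain, as the excerpt explicitly labels it, a conjecture.
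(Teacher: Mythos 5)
You have not proved the statement, and you could not have: in the paper this is precisely Darmon's rationality/Shimura reciprocity \emph{conjecture}, stated without proof, and nothing in the paper establishes it. What the paper actually proves (its main theorem) is strictly weaker: for a \emph{quadratic} primitive character $\chi$ of $G_c^+$ there is a global point $\mathbf{P}_\chi\in E(H_c^+)$ and some $n\in\Q^\times$ with $\log_E(P_\chi)=n\cdot\log_E(\mathbf{P}_\chi)$. Your own assessment at the end is therefore the correct one, and your diagnosis of the obstruction matches the paper's: the genus-theory factorization into two Mazur--Kitagawa $p$-adic $L$-functions, and hence the link to Heegner points via \cite{BD-HidaFamilies} and \cite{Mok1}, is available only for quadratic characters, and the general case is exactly what the forthcoming works \cite{DR}, \cite{BSV} cited in the paper's remarks are about.

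Two further points about your reduction scheme, in case you pursue it. First, Fourier inversion over $\widehat{G_c^+}$ needs rationality for \emph{all} characters, which is the open part; but second, even granting that input in the form the paper's theorem provides it, the argument does not yield the conjecture as stated. The identity $\log_E(P_\chi)=n_\chi\log_E(\mathbf{P}_\chi)$ carries an uncontrolled scalar $n_\chi\in\Q^\times$ (in the paper it arises from $\ell_1\ell_2$ and is only pinned down up to choices, and $\mathbf{P}_\chi$ may be torsion when $L'(E/F,\chi,1)=0$), so your candidate $\mathbf{P}_c=\frac{1}{|G_c^+|}\sum_\chi n_\chi\mathbf{P}_\chi$ localizes to a point whose $\chi$-components agree with those of $P_{\tau_Q}$ only up to these unknown rational rescalings and up to the kernel of $\log_E$. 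That suffices for a statement of the form ``$P_{\tau_Q}$ is, component by component, a $\Q^\times$-multiple of a global point in the logarithm,'' but not for the precise reciprocity law ``$P_c^\sigma$ localizes to $P_{\tau_Q^\sigma}$,'' which requires the scalars to be equal across characters (morally all equal to $1$). So even the conditional version of your argument proves a weaker assertion than the conjecture, and the unconditional statement remains open, exactly as the paper presents it.
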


\subsection{Real conjugation}\label{sec:2.3} 
Denote by $\tau_p\in \Gal(H_c^+/\Q)$ 
the Frobenius element at $p$, well defined only up to conjugation. 
As recalled above, since $p$ is inert in $F$, it splits completely in $H_c^+$ 
and (after fixing as above a prime of $H_c^+$ 
above $p$), $\tau_p$ corresponds to an involution of $\Gal(F_p/\Q_p)$. 
By \cite[Proposition 5.10]{Dar}, it is known that there 
exists an element $\sigma_\tau\in G_c^+$ such that
\begin{equation}
\label{real-conj}
\tau_p(J_\tau)=-w_MJ_{\tau^{\sigma_\tau}}\end{equation} and 
 $\tau_p(P_\tau)=w_NP_{\tau^{\sigma_\tau}}$ where $w_M$ and $w_N$ are the signs of the 
Atkin-Lehner involution $W_M$ and $W_N$, respectively, 
acting on $f$. 

\subsection{Families of measure-valued modular symbols and Darmon points} 
\label{sec:2.4}
Let \[\mathcal{X}=\Hom_{\Z_p}^\mathrm{cont}(\Z_p^\times,\Z_p^\times)\] and embed 
$\Z$ into $\mathcal{X}$ via $k\mapsto[x\mapsto x^{k-2}]$. There are 
rigid analytic functions $\kappa\mapsto a_n(\kappa)$ for integers $n\geq 1$, simultaneously 
defined on a suitable neighborhood $\mathcal{U}$ of $2$ (which we may assume
containing only integers $k$ with $k\equiv 2\mod {p-1}$) such that the formal power series expansion
\[f_\infty(\kappa)=\sum_{n\geq 1}a_n(\kappa)q^n\] when evaluated at $\kappa=k\in\Z$ with $k\geq 2$, is the 
$q$-expansion of a  normalized 
eigenform on $\Gamma_0(N)$ of weight $k$, and such that 
$f_2=f$, where recall that $f$ is the modular form attached to $E$ by modularity. 
If $k\neq 2$, $f_k$ is necessarily old, and we let $f_k^\sharp$ be the form of level $\Gamma_0(M)$ and weight $k$ whose $p$-stabilization
coincides with $f_k$; so $f_k$ and $f_k^\sharp$ are related by the formula:
\[f_k(z)=f_k^\sharp(z)-p^{k-1}a_p(k)^{-1}f_k^\sharp(pz).\]  
For $k=2$ we simply put $f_2^\sharp=f_2^{\phantom{\sharp}} =f$. 

Let $\mathcal{W}=\Q_p^2-\{(0,0)\}$ and let $\mathbb{D}$ denote the $\Q_p$-vector space of compactly supported 
$\Q_p$-valued measures on $\mathcal{W}$. Let $L_*=\Z_p^2$. 
Say that $(x,y)\in L_*$ is 
\emph{primitive} if $p$ does not divide both $x$ and $y$ and 
let $L_*'=(\Z_p^2)'$ denote the subset of $L_*$ consisting of primitive vectors.
Let $\mathbb{D}_*$ 
be the subspace consisting of measures which are supported on the 
$L_*'$. 
Let $\Lambda=\Z_p[\![\Z_p^\times]\!]$ be the Iwasawa algebra of $\Z_p^\times$, 
identified with a 
subring of the ring of analytic functions on $\mathcal{X}$. The $\Q_p$-vector 
space $\mathbb{D}$ is equipped with a structure of $\Lambda$-algebra arising form the 
action of $\Z_p^\times$ on $\mathcal{W}$ and $\Z_p^2$ given by $(x,y)\mapsto(\lambda x,\lambda y)$ 
for $\lambda\in\Lambda$. Also, $\GL_2(\Q_p)$ acts from the left on 
$\mathbb{D}_*$ by translations, and $\mathrm{MS}_{\Gamma_0(M)}(\mathbb{D}_*)$ 
is naturally equipped with an action of Hecke operators. 
In particular, we have a $U_p$-operator acting on 
$\mathrm{MS}_{\Gamma_0(M)}(\mathbb{D}_*)$ by the formula 
\[\int_X\phi \, d(U_p\mu)\{r\rightarrow s\}=\sum_{a=0}^{p-1}\int_{p^{-1}\gamma_a X}(\phi|p\gamma_a^{-1}) \,
d\mu\{\gamma_a(r)\rightarrow\gamma_a(s)\}\] 
for any locally constant function $\phi$ on $\mathcal{W}$.
Here $\gamma_a=\smallmat 1a0p$, and for any open compact 
subset $X\subseteq\mathcal{W}$ and any locally constant function $\phi$ on $\mathcal{W}$, 
we put $\int_X\phi \, d\mu=\int_{L'_*}\phi(x)\mathrm{char}_X(x) \, d\mu(x)$,
where $\mathrm{char}_X$  is the characteristic function of $X$.
In particular, 
we may define $\mathrm{MS}_{\Gamma_0(M)}^\ord(\mathbb{D}_*)$ to be the 
maximal submodule of $\mathrm{MS}_{\Gamma_0(M)}(\mathbb{D}_*)$
on which $U_p$ acts invertibly. 
For each $k\in\mathcal{U}$ 
there is a specialization map 
\[\rho_k: \mathbb{D}_*^\dagger\longrightarrow V_{k-2}(\C_p)\]
defined by 
\[\rho_k(\mu)(P(x,y))=\int_{\Z_p\times\Z_p^\times}P(x,y) \, d\mu(x,y).\]

Let $\Lambda^\dagger$ denote the ring of $\C_p$-valued functions on 
$\mathcal{X}$ which can be represented by a convergent power series 
expansion in some neighborhood of $2\in\mathcal{X}$ and 
define $\mathbb{D}_*^\dagger=\mathbb{D}_*\otimes_\Lambda\Lambda^\dagger$. 
For any $\mu=\sum_i\lambda_i\mu_i$ with $\lambda_i\in\Lambda^\dagger$ 
and $\mu_i\in\mathbb{D}_*$, we call a \emph{neighborhood of regularity} 
for $\mu$ any neighborhood $U_\mu$ 
of $2$ in $\mathcal U$ such that all $\lambda_i$ converge 
in $U_\mu$. The module $\mathrm{MS}_{\Gamma_0(M)}^\ord(\mathbb{D}_*)$ inherits 
a $\Lambda$-action from the $\Lambda$-module structure of $\mathbb{D}_*$, and we may 
define
$\mathrm{MS}_{\Gamma_0(M)}^{\ord,\dagger}(\mathbb{D}_*)=
\mathrm{MS}_{\Gamma_0(M)}^\ord(\mathbb{D}_*)\otimes_\Lambda\Lambda^\dagger$.
This $\Lambda^\dagger$-module is free of finite rank, and given 
$\mu\in \mathrm{MS}_{\Gamma_0(M)}^{\ord,\dagger}(\mathbb{D}_*)$ it is possible 
to find a common neighborhood of regularity for all the measures $\mu\{r\rightarrow s\}$, 
which we denote $U_\mu$. 
The specialization map $\rho_k$ induces a map, denoted by the same symbol, 
\[\rho_k:\mathrm{MS}_{\Gamma_0(M)}^{\ord,\dagger}(\mathbb{D}_*)\longrightarrow
\mathrm{MS}_{\Gamma_0(N)}(V_{k-2}(\C_p)), \]
and \cite[Theorem 1.5]{BD-HidaFamilies} shows that for each choice of sign $\pm$ 
there exists a neighborhood $U$ of $2$ in $\mathcal{X}$ and $\mu_*^\pm\in
 \mathrm{MS}_{\Gamma_0(M)}^{\ord,\dagger}(\mathbb{D}_*)$ 
 such that $\rho_2(\mu_*^\pm)=I_f^\pm$ 
and for all integers $k\in U$, $k\geq 2$, there is $\lambda^\pm(k)\in\C_p$
such that $\rho_k(\mu_*^\pm)=\lambda^\pm(k)I_{f_k}^\pm$; also, $U$ can be 
chosen so that $\lambda^\pm(k) \neq 0$ for all $k\in U$. 

\begin{theorem}\label{logtheorem}
If $Q\in \mathcal F_{Dc^2}$, then 
\[\log_q(P_{\tau_Q})=\int_{(\Z_p^2)'}\log_q(x-\tau_Q y) \, d\mu_*^\pm\{r\rightarrow \gamma_\tau(r)\}(x,y)\]
\end{theorem}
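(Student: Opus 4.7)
The plan is to unwind the definition of $P_{\tau_Q}$ via Tate uniformization and rewrite the resulting additive indefinite integral as an integral against the family modular symbol $\mu_*^\pm$, using the weight-two specialization $\rho_2(\mu_*^\pm)=I_f^\pm$. First, apply $\log_q$ to $P_{\tau_Q}=\Phi_{\mathrm{Tate}}(J_{\tau_Q}^\times)$: since $\log_q$ vanishes on $q^\Z$ it descends through the Tate uniformization, and combining with the definition of the additive indefinite integral as $\log_q$ of the double multiplicative integral gives
\[
\log_q(P_{\tau_Q}) \;=\; \log_q(J_{\tau_Q}^\times) \;=\; \int^{\tau_Q}\!\int_r^{\gamma_{\tau_Q}(r)}\omega_f.
\]

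Next, I would identify the measure $\mu_f^\pm$ on $\mathbb P^1(\Q_p)$ with the pushforward of $\mu_*^\pm$ under the quotient map $(\Z_p^2)'\twoheadrightarrow\mathbb P^1(\Q_p)$, $(x,y)\mapsto x/y$ (suitably interpreted on both affine charts). This identification is pinned down by $\rho_2(\mu_*^\pm)=I_f^\pm$ together with the characterizing properties of $\mu_f^\pm$ from \cite[Prop.~1.3]{BD-Rationality} and the $\GL_2(\Q_p)$-equivariance of both objects. Taking $\log_q$ of the defining formula for the double multiplicative integral and pulling back through the projection yields, for any $\tau_0,\tau_1\in\mathcal H_p$,
\[
\int_{\tau_0}^{\tau_1}\!\int_r^s\omega_f \;=\; \int_{(\Z_p^2)'}\log_q\!\left(\frac{x-\tau_1 y}{x-\tau_0 y}\right) d\mu_*^\pm\{r\to s\}(x,y).
\]
Specializing to $s=\gamma_{\tau_Q}(r)$ and $\tau_1=\tau_Q$, then using property~(2) of the indefinite integral to split $\int^{\tau_Q}=\int^{\tau_0}+\int_{\tau_0}^{\tau_Q}$ and additively splitting the logarithm, the theorem reduces to the vanishing of
\[
C \;:=\; \int^{\tau_0}\!\int_r^{\gamma_{\tau_Q}(r)}\omega_f \;-\; \int_{(\Z_p^2)'}\log_q(x-\tau_0 y)\, d\mu_*^\pm\{r\to \gamma_{\tau_Q}(r)\}(x,y),
\]
and the same computation already shows that $C$ does not depend on the auxiliary point $\tau_0\in\mathcal H_p$.

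The main obstacle is to verify $C=0$. To do so I would exploit the $\Gamma$-equivariance of the indefinite integral (property~(1)) together with the $\Gamma_0(M)$-invariance of $\mu_*^\pm$ as a modular symbol, both applied via the stabilizer $\gamma_{\tau_Q}\in\Gamma$ of $\tau_Q$. Property~(1) makes the first term $\gamma_{\tau_Q}$-equivariant, while the eigenvalue identity $\gamma_{\tau_Q}^*(x-\tau_Q y)=\lambda^{-1}(x-\tau_Q y)$ with $\lambda=c\tau_Q+d$ governs the transformation of the second. A matching telescoping argument, together with the consistency of normalizations between the Tate parameter $q$ and the eigenvalue $\lambda$ built into the very construction of $J_{\tau_Q}^\times$, then forces $C$ into the kernel of $\log_q$, yielding $C=0$. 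The key technical input is the upgrade of $\rho_2(\mu_*^\pm)=I_f^\pm$ from a modular-symbol identity to a full measure-theoretic identification of $\mu_*^\pm$ with $\mu_f^\pm$; once this is granted, the rest of the argument is a careful bookkeeping with properties~(1)--(3) of the indefinite integral.
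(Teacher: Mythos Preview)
The paper's own proof is a one-line citation: it invokes \cite[Theorem 2.5]{BD-Rationality} and \cite[Corollary 2.6]{BD-Rationality}, adding only the observation that the lattice $\{(x,y)\in\Q_p^2 : x-\tau_Q y\in\mathcal{O}_F\otimes\Z_p\}$ coincides with $\Z_p^2$ (which is what lets one integrate over $(\Z_p^2)'$ rather than over the primitive vectors of some other lattice). What you are attempting is essentially a from-scratch reproof of \cite[Theorem 2.5]{BD-Rationality}, and your sketch does not carry the weight of that result.

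There are two concrete gaps. First, in your step~2 you claim to identify $\mu_f^\pm$ with the pushforward of $\mu_*^\pm$ by appealing to ``$\GL_2(\Q_p)$-equivariance of both objects.'' But $\mu_*^\pm$ lives in $\mathrm{MS}_{\Gamma_0(M)}(\mathbb{D}_*)$ and is only $\Gamma_0(M)$-invariant, whereas $\mu_f^\pm$ is $\Gamma$-invariant, with $\Gamma$ strictly larger (it contains $\smallmat{p}{0}{0}{1}$). Bridging this gap is exactly the hard content of \cite[Theorem 2.5]{BD-Rationality}: one uses that $\mu_*^\pm$ lies in the ordinary part, so that the $U_p$-operator acts invertibly, and parlays this into full $\Gamma$-invariance of the resulting expression. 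Your sketch does not supply this mechanism.

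Second, your step~5 does not prove $C=0$. You have correctly shown $C$ is independent of $\tau_0$, but your proposed argument---using the eigenvalue relation $\gamma_{\tau_Q}^*(x-\tau_Q y)=\lambda^{-1}(x-\tau_Q y)$---applies to the integrand $\log_q(x-\tau_Q y)$, not to $\log_q(x-\tau_0 y)$ which is what actually appears in $C$. The phrase ``forces $C$ into the kernel of $\log_q$'' is meaningless as written, since $C$ already sits in $\C_p$, the codomain of $\log_q$. The correct route is the one taken in \cite{BD-Rationality}: show that $(\tau,r,s)\mapsto\int_{(\Z_p^2)'}\log_q(x-\tau y)\,d\mu_*^\pm\{r\to s\}$ satisfies all three characterizing properties of the indefinite integral (the nontrivial one being $\Gamma$-invariance, as above), and then invoke uniqueness. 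You also omit the lattice identification that the paper singles out, without which one does not know the domain of integration is $(\Z_p^2)'$ rather than the primitive vectors of some translate.
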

\begin{proof} This follows from \cite[Theorem 2.5]{BD-Rationality} as in 
\cite[Corollary 2.6]{BD-Rationality} noticing that 
the set \[\{(x,y)\in \Q_p^2\ | \ x-\tau_Q y\in \mathcal{O}_K\otimes\Z_p\}\] coincides with $\Z_p^2$. 
\end{proof}

\section{Complex $L$-functions of real quadratic fields}\label{Sec:SV} 

Here we recast the special value formula of the second author and 
Whitehouse \cite{MW}, restricted to the setting of this paper, 
in a form convenient for our purposes. 

Let $f\in S_k(\Gamma_0(M))$ be a even weight $k\geq2$ newform for $\Gamma_0(M)$. 
Let $F/\Q$ be a real quadratic field of discriminant $D>0$, prime to $M$, and let 
$\chi_D$ be the associated quadratic Dirichlet character;  
with a slight abuse of notation, we will 
denote by the same symbol $\chi_D:\mathbb{A}_\Q^\times\rightarrow \C^\times$ 
the associated Hecke character, where $\mathbb A_\Q$ is the adele ring of $\Q$. We assume that 
all primes $\ell\mid M$ are split in $F$. 

Let $c$ be an integer prime to $DM$
and let $H_c^+$ be the strict ring class field of $F$ of conductor $c$.
Let $G_c^+=\Gal(H_c^+/F)$. 
Let $\chi:G_c^+\rightarrow \C^\times$ be a primitive character, namely, a character which does not 
factor through $G_f^+$ for any proper divisor $f\mid c$;
with a slight abuse of notation, we will 
denote by the same symbol $\chi:\mathbb{A}_F^\times\rightarrow \C^\times$ 
the associated Hecke character, where $\mathbb A_F$ is the adele ring of $F$. 

\subsection{Optimal embedding theory}\label{sec:optemb}
We set up the theory of optimal embeddings, and its relation to the strict, or narrow, class group of 
$\mathcal{O}_c$ and quadratic forms of discriminant $Dc^2$. For more details, 
see \cite[\S4.3]{LV} and \cite[\S4.1]{LRV}; for the general theory of optimal embeddings, see 
\cite{Vigneras} and \cite[\S\S3,4]{LV-MM}.  

Let us denote by $\mathcal B=\M_2(\Q)$ the split quaternion algebra over $\Q$ 
and denote by $R_0$ the order in $\mathcal B$ consisting of matrices in $\M_2(\Z)$ 
which are upper triangular modulo $M$.  
Let $\mathcal O_c=\Z+c\cdot\mathcal O_F$ be the order of $F$ of 
conductor $c$, where $\mathcal{O}_F$ is the ring of integers of $F$.
Let $\mathrm{Emb}(\mathcal{O}_c,R_0)$ be the set of optimal embeddings
$\psi:F\rightarrow\mathcal{B}$ of $\mathcal{O}_c$ into $R_0$ (so $\psi(\mathcal{O}_c)=R_0\cap\psi(F)$). 
For every prime $\ell\mid M$ fix orientations of $R_0$ and $\cO_c$ at $\ell$, i.e., ring homomorphisms 
$\mathfrak O_\ell:R_0\rightarrow\mathbb F_{\ell}$ and 
$\mathfrak o_\ell:\mathcal O_c\rightarrow\mathbb F_{\ell}$. 
Two embeddings $\psi,\psi'\in\Emb\bigl(\cO_c,R_0\bigr)$ are said to have
{the same orientation} at a prime $\ell \mid M$ if 
$\mathfrak O_\ell\circ(\psi|_{\cO_c})=\mathfrak O_\ell\circ(\psi'|_{\cO_c})$ and are said to have {opposite orientations} at $\ell$ otherwise. 
An embedding $\psi\in\Emb\bigl(\cO_c,R_0\bigr)$ is said to be \emph{oriented} if $\mathfrak O_\ell\circ(\psi|_{\cO_c})=\mathfrak o_\ell$ for all primes $\ell \mid M$.
We denote the set of oriented optimal embeddings of $\cO_c$ into $R_0$ by 
$\E(\cO_c,R_0)$.
The action of $\Gamma_0(M)$ on $\Emb(\cO_c,R_0)$ from the right 
by conjugation restricts to an action on 
$\E(\cO_c,R_0)$. If $\psi\in\E(\cO_c,R_0)$ then
$\psi^*:=\omega_\infty\psi \omega_\infty^{-1}$ 
belongs to $\E(\cO_c,R_0)$ as well, where recall that $\omega_\infty=\smallmat 100{-1}$, 
and $\psi$ and $\psi^*$ have 
opposite orientations at all $\ell\mid M$.
If $\ell$ is a prime dividing $M$ then $\psi$ and $\omega_\ell\psi\omega_\ell^{-1}$,
where $\omega_\ell=\smallmat 0{-1}\ell0$, 
have opposite orientations at $\ell$ and the same orientation at all primes dividing $M/\ell$.

Let $\mathfrak{a}\subset\cO_c$ be an ideal representing a class $[\mathfrak{a}]\in\Pic^+(\cO_c)$ and let $\psi\in\Emb(\cO_c,R_0)$. The left $R_0$-ideal $R_0\psi(\mathfrak{a})$ is principal; let $a\in R_0$ be a generator of this ideal with positive reduced norm, which is unique up to elements in $\Gamma_0(M)$. 
The right action of $\psi(\cO_c)$ on $R_0\psi(\mathfrak{a})$ shows that $\psi(\cO_c)$ is contained in the right order of $R_0\psi(\mathfrak{a})$, which is equal to $a^{-1}R_0a$. This 
defines an action of $\Pic^+(\cO_c)$ on conjugacy classes of embeddings given by
$[\mathfrak{a}]\cdot[\psi]=\bigl[a\psi a^{-1}\bigr]$ in $\Emb(\cO_c,R_0)/\Gamma_0(M)$.
The principal ideal $(\sqrt{D})$ is a proper $\cO_c$-ideal of $F$; 
denote $\mathfrak D$ its class in $\Pic^+(\cO_c)$ and define 
$\sigma_F:=\text{rec}(\mathfrak D)\in G_c^+$, where $\mathrm{rec}$ 
is the arithmetically normalized reciprocity map of class field theory. If $\mathfrak{a}=(\sqrt{D})$ then we can take
$a=\omega_\infty\cdot\psi(\sqrt{D})$ in the above discussion, 
which shows that  
$\mathfrak{D}\cdot[\psi]=\bigl[\omega_\infty\psi\omega_\infty^{-1}\bigr]=[\psi^\ast]$.
Using the reciprocity map of class field theory, 
for all $\sigma\in G_c^+$ and $[\psi]\in\Emb(\cO_c,R_0)/\Gamma_0(M)$ define
$\sigma\cdot[\psi]:=\text{rec}^{-1}(\sigma)\cdot[\psi]$. 
In particular, $\sigma_F\cdot[\psi]=[\psi^*]$ 
for all $\psi\in\Emb(\cO_c,R_0)$. 

If $\psi$ is an oriented optimal embedding
then the Eichler order $a^{-1}R_0a$ inherits an orientation from the one of $R_0$ and it can be checked that we get an induced action of $\Pic^+(\cO_c)$ (and $G_c^+$) on the set 
$\E(\cO_c,R_0)/\Gamma_0(M)$, and this action is free and transitive. To describe a (non-canonical) bijection between $\E(\cO_c,R_0)/\Gamma_0(M)$ and $G_c^+$, 
fix once and for all an auxiliary embedding $\psi_0\in\E(\cO_c,R_0)$; 
then 
$\sigma\mapsto\sigma\cdot[\psi_0]$ defines a bijection 
$E:G_c^+\rightarrow\E(\cO_c,R_0)/\Gamma_0(M)$ 
whose inverse, $G=E^{-1}:\E(\cO_c,R_0)/\Gamma_0(M)\longrightarrow G_c^+$ satisfies 
the relation 
$G([\psi^*])=\sigma_F\cdot G([\psi])$
for all $\psi\in\E(\cO_c,R_0)$.    
Choose for every $\sigma\in G_c^+$ an embedding
$\psi_\sigma\in E(\sigma)$, 
so that the family $\{\psi_\sigma\}_{\sigma\in G_c^+}$ is a set of representatives of the $\Gamma_0(M)$-conjugacy classes of oriented optimal embeddings of $\cO_c$ into $R_0$. If $\gamma,\gamma'\in R_0$ write $\gamma\sim\gamma'$ to indicate that $\gamma$ and $\gamma'$ are in the same $\Gamma_0(M)$-conjugacy class, and adopt a similar notation for (oriented) optimal embeddings of $\cO_c$ into $R_0$. For all $\sigma,\sigma'\in G_c^+$ one has $\sigma\cdot\psi_{\sigma'}\sim\psi_{\sigma\sigma'}$ and 
$\psi_\sigma^*\sim \psi_{\sigma_F\sigma}$ 
for all $\sigma\in G_c^+$. 

Finally, note that the set $\mathcal{E}(\mathcal{O}_c,R_0)/\Gamma_0(M)$ is in bijection with $\mathcal{F}_{Dc^2}/\Gamma_0(M)$, since 
both sets are in bijection with $G_c^+$; explicitly, to the class of the oriented optimal embedding $\psi$ corresponds the class of the quadratic form \[Q_\psi(x,y)=Cx^2-2Axy-By^2\] with 
$\psi(\sqrt{D}c)=\smallmat ABC{-A}$.

\subsection{Adelic ring class groups}
Below we will want to view the ring class group $G_c^+$ adelically.
Since this is omitted from the literature on class field 
theory that we are aware of (adelic treatments usually explain ray class fields but 
not ring class fields, and expositions of ring class groups which treat real 
quadratic extensions, e.g., \cite{cohn}, tend to not use adelic language), we explain
briefly the passage from classical ring class groups to adelic ring class groups
here.  For a point of reference, we also describe the relation with ray class groups.
As it causes little extra difficulty, in this subsection only,
we allow $F$ to be an arbitrary (real or imaginary) quadratic field of discriminant $D$ and do not require $c$ to be coprime to $D$.

Let $c \in \N$ and $\mathfrak m = \mathfrak m_f \mathfrak m_\infty$ where
$\mathfrak m_f = c \mathcal O_F$ and $\mathfrak m_\infty$ is a subset of the real
places of $F$.  For a real place $v$, let $\sigma_v$ be the associated embedding  
of $F$ into $\R$. Let $J_{\mathfrak m}$ be the group of fractional ideals of
$\mathcal O_F$ which are prime to $\mathfrak m_f$.  Let $F_{\mathfrak m}^1$
be the subset of $F^\times$ consisting of $x \in F^\times$ such that $\sigma_v(x) > 0$
for each $v \in \mathfrak m_\infty$ and $v_{\mathfrak p}(x-1) \ge v_{\mathfrak p}(c)$ for $\mathfrak p | \mathfrak m_f$.  Let $P_{\mathfrak m}^1$ denote the
set of principal ideals generated by elements of $F_{\mathfrak m}^1$.
Then the ray class group mod $\mathfrak m$ of $F$ is 
$\mathrm{Cl}_{\mathfrak m}(F) = J_{\mathfrak m}/P^1_{\mathfrak m}$.

Let $F_{\mathfrak m}^\Z$ be the set of $x \in F^\times$ such that
$\sigma_v(x) > 0$ for each $v \in \mathfrak m_\infty$ and for each
$\mathfrak p | \mathfrak m_f$ there exists $a \in \Z$ coprime to $c$ such that
$v_{\mathfrak p}(x-a) \ge v_{\mathfrak p}(c)$.  Let $P^{\Z}_{\mathfrak m}$
be the set of principal ideals in $F$ generated by elements of 
$F_{\mathfrak m}^\Z$.  Then the ring class group mod $\mathfrak m$ of
$F$ is $G_{\mathfrak m}(F) = J_{\mathfrak m}/P^\Z_{\mathfrak m}$.
Note we can write $F_{\mathfrak m}^\Z = \bigcup_{a \in (\Z/c\Z)^\times} 
a F_{\mathfrak m}^1$.  Hence $\mathrm{Cl}_{\mathfrak m}(F) / \mathrm{Im} (\Z/c \Z)^\times \simeq G_{\mathfrak m}(F) $, where $\mathrm{Im} (\Z/c \Z)^\times$
denotes the image of the natural map from
$(\Z/c\Z)^\times$ to $\mathrm{Cl}_{\mathfrak m}(F)$,
which is not in general injective.

Via the usual correspondence between ideals and ideles, $J_{\mathfrak m}$
is identified with $\hat F^\times_{\mathfrak m}/ \hat {\mathcal O}_F^\times$,
where $\hat F^\times_{\mathfrak m}$ consists of finite ideles $(\alpha_v)$ such that
$\alpha_v \in \mathcal O_{F,v}^\times$ for all $v | \mathfrak m_f$
and $\hat {\mathcal O}_F^\times = \prod_{v < \infty}  \mathcal O_{F,v}^\times$.  
For $v < \infty$, we put $W_v = \mathcal O_{F,v}^\times$
unless $v \mid \mathfrak m_f$, in which case $W_v = 1+ \mathfrak m_f \mathcal O_{F,v}$.
For $v \mid \infty$, we put $W_v = F_v^\times$ unless 
$v \mid \mathfrak m_\infty$, in which case $W_v = \R_{> 0}$.  
Now define $W = \prod W_v$ and  $\A_{F, \mathfrak m}^1 = \prod'_{v \nmid \mathfrak m} F_v^\times \times \prod_{v | \mathfrak m} W_v$.
Then we have $F_{\mathfrak m}^1 = F^\times \cap \A_{F,\mathfrak m}^1$
and $J_{\mathfrak m} \simeq \A_{F, \mathfrak m}^1 / W$, 
so  $\mathrm{Cl}_{\mathfrak m}(F) = F_{\mathfrak m}^1 \bs \A_{F, \mathfrak m}^1 / W
= F^\times \bs \A_F^\times / W$.

For the ring class group, again we can realize it as a quotient of the idele class group
$F^\times \bs \A_F^\times$, but now it will be a quotient by a subgroup $U = \prod U_\ell \times
U_\infty$ which is a product over rational primes, rather than primes of $F$.
As usual, for a rational prime $\ell < \infty$, write 
$\mathcal O_{F,\ell}$ for $\mathcal O_F \otimes_\Z \Z_\ell$, which is isomorphic to
$\Z_\ell \oplus \Z_\ell$ if $\ell$ splits in $F$ and otherwise is $\mathcal O_{F,v}$ if $v$ is the
unique prime of $F$ above $\ell$.
Now set $U_\ell = \mathcal O_{F,\ell}^\times$ if $\ell \nmid c$ and
$U_\ell = (\Z_\ell + c \mathcal O_{F,\ell})^\times$ if $\ell \mid c$.  We can uniformly write
$U_\ell = \mathcal O_{c,\ell}^\times$ for $\ell < \infty$, where 
$\mathcal O_c = \Z + c \mathcal O_F$ and $\mathcal O_{c,\ell} = \mathcal O_c \otimes_\Z \Z_\ell$. For later use, we will write 
$\hat {\mathcal O}_c^\times  = \prod U_\ell$.  Note that this is different from 
the product $\prod_{v < \infty}  {\mathcal O}^\times_{c,v}$ for $v$ running over primes of 
$F$ if $c$ is divisible by primes which split in $F$.
Put $U_\infty = W_\infty = \prod_{v | \infty} W_\infty$ and
$\A_{F, \mathfrak m}^\Z = \prod'_{v \nmid \mathfrak m} F_v^\times \times \prod_{v | \mathfrak m} U_v$.
Then $F_{\mathfrak m}^\Z = \A_{F, \mathfrak m}^\Z \cap F^\times$ 
and we see the ring class group is
\[ G_{\mathfrak m}(F) = F_{\mathfrak m}^\Z \bs \A_{F, \mathfrak m}^\Z / U
= F^\times \bs \A_F^\times / U. \] 

In our case of interest, namely $F$ is real quadratic
and $\mathfrak m_\infty$ contains both real places of $F$, we write $U_\infty = F_\infty^+$.
Thus we can write our strict ring class group as
\begin{equation} \label{id-ring-class}
 G_c^+ = F^\times \bs \A_F^\times / \hat {\mathcal O}_c^\times F_\infty^+.
\end{equation}

\subsection{Special value formulas} \label{special-values}
We return to our case of interest where $F/\Q$ is real quadratic of discriminant $D$, 
$f$ is a weight $k$ newform for $\Gamma_0(M)$,
$c$ is an integer coprime with $D M$, and $\mathcal{O}_c=\Z+c\mathcal{O}_F$. 
Let $H_c$ be the corresponding ring class field and $h_c$ be the degree of 
$H_c/F$, which coincides 
with the cardinality of $\Pic(\mathcal{O}_c)$. Denote by $h_c^+$ the cardinality of $G_c^+$, 
so $h_c^+/h_c$ is equal to $1$ or $2$. Fix ideals $\mathfrak{a}_\sigma$ 
for all $\sigma\in G_c=\Gal(H_c/F)$ in such a way that $\Sigma_c=\{\mathfrak{a}_\sigma\ |\ \sigma\in G_c\}$ is a complete system of representatives for $\Pic(\mathcal{O}_c)$. 
Clearly $\Sigma_c^+=\Sigma_c$ is also a system of representatives for $\Pic^+(\mathcal{O}_c)$ 
if $h_c^+=h_c$, while if $h_c^+\neq h_c$ the set $\Sigma_c^+$ of representatives 
of $\Pic^+(\mathcal{O}_c)$ can be written as $\Sigma_c\cup\Sigma_c'$ with 
$\Sigma_c'=\{\mathfrak{d}\mathfrak{a}_\sigma\ |\ \sigma\in G_c\}$ and $\mathfrak{d}=(\sqrt{D})$. 
Let $\epsilon_c > 1$ be the smallest totally positive power of a fundamental unit in 
$\mathcal{O}_c^\times$,  and 
for all $\sigma\in G_c^+$ define $\gamma_\sigma=\psi_\sigma(\epsilon_c)$.
Finally, define 
\begin{equation}\label{alpha}\alpha= \prod_{\ell\mid c, \, {D \leg \ell} = -1}
\ell,
\end{equation}
where $\ell$ runs over all rational primes dividing $c$ which are inert in $F$.

Denote by $\pi_f$ and $\pi_\chi$ the 
automorphic representations of $\GL_2(\A_\Q)$ attached to $f$ and $\chi$,
respectively.

\begin{theorem} \label{thm:sv} Let $c$ be an integer such that $(c,DM)=1$. 
Let $\chi$ be a character of $G_c^+$ such that  
the absolute norm of the conductor of $\chi$ is $c(\chi) = c^2$.
For any choice of the base point $\tau_0\in\mathcal{H}$, we have 
\[L(\pi_{f}\otimes\pi_\chi,1/2)=\frac{4}{\alpha^2 \cdot (Dc^2)^{(k-1)/2}}
\left|\sum_{\sigma\in G_c^+}{\chi^{-1}(\sigma)}\int_{\tau_0}^{\gamma_\sigma(\tau_0)}f(z)Q_{\psi_\sigma}(z,1)^{(k-2)/2} \, dz\right|^2.\]
\end{theorem}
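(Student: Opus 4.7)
The plan is to derive the formula by specializing the Waldspurger-type formula of \cite{MW} to the pair $(\pi_f, \pi_\chi)$ on the split quaternion algebra $\mathcal{B} = \M_2(\Q)$, and then re-expressing the adelic toric period in classical terms via the optimal-embedding machinery of Section \ref{sec:optemb}. The Martin--Whitehouse formula asserts an identity of the form
\[
L(\pi_f \otimes \pi_\chi, 1/2) = C \cdot \bigl| P(\phi,\chi) \bigr|^2, \qquad
P(\phi,\chi) = \int_{F^\times \A_\Q^\times \bs \A_F^\times} \phi(t)\, \chi(t)\, dt,
\]
where $\phi$ is a Gross--Prasad test vector in $\pi_f$ specified place by place and $C$ is an explicit product of local constants.

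For the test vector we take: at infinity, the lowest-weight vector of the holomorphic discrete series, whose matrix coefficient against the torus character evaluated on a CM point reproduces the polynomial $Q_{\psi_\sigma}(z,1)^{(k-2)/2}$ (note $k$ is even); at primes $\ell \mid M$, which are split in $F$ by hypothesis, the newform, which is invariant under $\Gamma_0(M\Z_\ell)$; and at primes $\ell \mid c$, the vector invariant under the local unit group $U_\ell = (\Z_\ell + c\,\mathcal{O}_{F,\ell})^\times$ acting through the auxiliary embedding $\psi_0$ fixed in Section \ref{sec:optemb}. With these choices $\phi$ is invariant, via the torus, under $\hat{\mathcal{O}}_c^\times F_\infty^+$, so $P(\phi,\chi)$ descends to a finite sum over the quotient which, by \eqref{id-ring-class}, is exactly $G_c^+$. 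The summand indexed by $\sigma$ is computed by translating $\phi$ by a representative of $\sigma$; this replaces the auxiliary embedding $\psi_0$ by $\psi_\sigma$ in the conjugation action, matching the normalization of the bijection $E:G_c^+ \to \mathcal{E}(\cO_c,R_0)/\Gamma_0(M)$. The resulting archimedean integral is then taken over a fundamental domain for the action of the stabilizer of the torus modulo totally positive elements, which is generated by $\epsilon_c$; this produces precisely the classical line integral $\int_{\tau_0}^{\gamma_\sigma(\tau_0)} f(z)\,Q_{\psi_\sigma}(z,1)^{(k-2)/2}\,dz$ with $\gamma_\sigma = \psi_\sigma(\epsilon_c)$. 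The factor $\chi^{-1}(\sigma)$ arises because evaluating $\chi$ on a coset representative $t_\sigma$ contributes $\chi(t_\sigma)$ which, with the convention that the arithmetic reciprocity sends $t_\sigma$ to $\sigma$, becomes $\chi^{-1}(\sigma)$ after applying the inversion built into the definition of $P(\phi,\chi)$ (or equivalently, after passing from a representation-theoretic to a Galois-theoretic normalization).

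It remains to unwind the constant $C$. The archimedean factor contributes $(Dc^2)^{(k-1)/2}$ through the discriminant factor $\sqrt{Dc^2}$ implicit in the definition of $Q_{\psi_\sigma}$ and the weight-$k$ archimedean $L$- and $\epsilon$-factors. At primes $\ell \mid M$, the newness of $\phi_\ell$ and the unramifiedness of $\chi_\ell$ collapse the local factor to an essentially trivial contribution. At primes $\ell \mid c$, the assumption $c(\chi) = c^2$ forces $\chi_\ell$ to be maximally ramified of local conductor $\ell$; the MW local computations then contribute $1$ at split primes and $\ell$ at inert primes, combining to give $\alpha^2$ via \eqref{alpha}. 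The main obstacle is precisely this local bookkeeping: one must match the adelic measure normalizations, local test-vector self-pairings, and local $L$- and $\epsilon$-factors from \cite{MW} against the classical complex-analytic normalization of the modular integral, carrying out slightly different analyses at split versus inert primes dividing $c$, in order to arrive at the exact rational constant $4/(\alpha^2(Dc^2)^{(k-1)/2})$ appearing in the statement.
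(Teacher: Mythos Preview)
Your overall strategy matches the paper's: specialize the Martin--Whitehouse formula on the split quaternion algebra, pick Gross--Prasad test vectors, and rewrite the adelic toric period as a sum over $G_c^+$ of classical integrals. A few points in your sketch are inaccurate and would cause trouble if carried out as stated.

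First, the archimedean test vector in \cite{MW} is not the lowest-weight vector: it is $\pi_\infty(\gamma_\infty)(\varphi_{\infty,k}\pm\varphi_{\infty,-k})$, a translate by an element $\gamma_\infty$ diagonalizing the embedded torus of a combination of weight $\pm k$ vectors, with the sign determined by $\chi_\infty$. The paper reduces to (a translate of) the weight-$k$ vector only after observing that the period kills the other summand; this step produces a factor of $4$ in the inner-product normalization that you would otherwise miss.

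Second, the phrase ``matrix coefficient \ldots\ evaluated on a CM point'' is misplaced: $F$ is real quadratic, and the archimedean period is an integral over a geodesic segment, not a point evaluation. The polynomial $Q_{\psi_\sigma}(z,1)^{(k-2)/2}$ does not come from a matrix coefficient but from an explicit change of variables $z=\gamma_\infty\cdot iy$ on the integral over $F_\infty^{1,+}/\langle\pm\epsilon_c\rangle$; one computes directly that $j(\gamma_\infty\smallmat{y}{0}{0}{1};i)^{-2}=i\sqrt{Dc^2}/Q(z,1)$ and $d^\times y=\sqrt{Dc^2}\,dz/Q(z,1)$.

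Third, your attribution of $\alpha^2$ to the MW local factors alone is not correct. The formula in \cite{MW} contributes $\prod_{\ell\mid c}\bigl(\ell/(\ell-\chi_D(\ell))\bigr)^2$, which is not ``$1$ at split and $\ell$ at inert''; the factor $\alpha^2$ appears only after combining this with the explicit volume $\vol(\hat{\mathcal{O}}_c^\times)$, which must be computed separately and cancels differently against split versus inert primes. Also, $c(\chi)=c^2$ gives $c(\chi_\ell)=v_\ell(c)$, not local conductor $\ell$; this matters for choosing $R_{\chi,\ell}=R_{0,\ell}$ correctly.
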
 

When $c=1$, this is the positive weight case of \cite[Theorem 6.3.1]{Popa}, which also treats
weight 0 Maass forms.  If desired, one could similarly extend the above result to
 weight 0 Maass forms.

\begin{proof} 
Write $\pi := \pi_f = \otimes'_v \pi_v$, where $v$ runs over all places of $\Q$, 
and let $n_\ell(\pi)$ be the conductor of $\pi_\ell$ for each prime number $\ell$.
Define 
\[ U_f(M) = \prod_\ell U_\ell(M), \quad U_\ell(M) = \left\{ \mat abcd \in \GL_2(\Z_\ell) : c \equiv 0 \mod M \right\}. \]
We associate to $f$ the automorphic form $\varphi_\pi=\varphi_f$ on $\GL_2(\A_\Q)$ given by
\[ \varphi_\pi : Z(\A) \GL_2(\Q) \bs \GL_2(\A_\Q) / U_f(M) \longrightarrow \C \]
\[ \varphi_\pi \bmx a & b \\ c & d \emx = 2 j(g;i)^kf \left(\frac{ a i + b}{ ci + d} \right),  \]
for $g=\smallmat abcd \in \GL_2(\R)^+$, where we write 
$j(g; i)=\det(g)^{1/2} (ci+d)^{-1}$ for the automorphy factor.
Then $\varphi_\pi$ is $R_{0,\ell}$-invariant for each finite prime $\ell$.
The scaling factor of 2 is present so that the archimedean zeta integral of
$\varphi_\pi$ gives the archimedean $L$-factor.

For $\phi\in \pi$, let 
\[(\phi,\phi)=\int_{Z(\mathbb{A}_\Q)\GL_2(\Q)\backslash \GL_2(\mathbb{A}_\Q)}\phi(t)\overline{\phi(t)}\, dt\] 
be the Petersson norm of $\phi$, where we take as measures on the groups 
$\GL_2(\mathbb{A}_\Q)$ and $Z(\A_\Q)$ the products of the 
local Tamagawa measures.  Here, as usual, we take the quotient measure on
the quotient, giving $\GL_2(\Q) \subset \GL_2(\A_\Q)$ the counting measure.

Denote by $\pi_F$  the base change of $\pi$ to $F$.  Let  
$L(\pi_F\otimes\chi,s)$ be the $L$-function of $\pi_F$ twisted by $\chi$, which
equals the Rankin--Selberg $L$-function $L(\pi_{f}\otimes\pi_\chi,s)$.
Since $F$ splits at each prime $\ell$ where $\pi$ is ramified and at $\infty$,
$\epsilon(\pi_{F,v} \otimes, \chi_v, 1/2) = +1$ for all places $v$ of $\Q$.
Then, in our setting, the main result in \cite[Theorem 4.2]{MW} states that
\begin{equation} \label{eq:mw1}
\frac{\left|P_\chi(\varphi)\right|^2}{(\varphi,\varphi)}=
\frac{L(\pi_F\otimes\chi,1/2)}{(\varphi_\pi',\varphi_\pi')} \cdot
\frac{4}{\sqrt{D c(\chi)}}\cdot \prod_{\ell\mid c}\left(\frac{\ell}{\ell-\chi_D(\ell)}\right)^2,
\end{equation}
where $\varphi \in \pi$ is a suitable test vector,
\[P_\chi(\varphi)=\int_{F^\times \mathbb{A}_{\Q}^\times\backslash \mathbb{A}_F^\times}\varphi(t) \chi^{-1}(t) \, dt, \]
and $\varphi_\pi'$ (denoted $\varphi_\pi$ in \emph{loc.\ cit.}) is a vector in $\pi$
differing from $\varphi_\pi$ only at $\infty$.  
We describe $\varphi$ and $\varphi_\pi'$ precisely below.
Similar to before, we take the products of local Tamagawa measures on $\A_F^\times$ and $\A_\Q^\times$,
and give $F^\times$ the counting measure.

First we describe the choice of the test vector $\varphi$, which we only need to
specify up to scalars, as the left-hand side above is invariant under scalar 
multiplication. We will take 
$\varphi=\otimes'_v\varphi_v$, 
where $v$ runs over all places of $\Q$.  
For $\ell$ a finite prime of $\Q$, let $c(\chi_\ell)$ denote the smallest $n$ such that
$\chi_\ell$ is trivial on $(\Z_\ell + \ell^n \mathcal O_{F,\ell})^\times$.  Since $\chi$ is a character
of $G_c^+$, we have $c(\chi_\ell) \le v_\ell(c)$ for all $\ell$.
In particular, $\chi_\ell$ is trivial on $\Z_\ell^\times$, so $c(\chi_\ell)$ is the smallest $n$ such that
$\chi_\ell$ is trivial on $(1 + \ell^n \mathcal O_{F,\ell})^\times$, and thus agrees with the
usual definition of the conductor of $\chi_\ell$ when $\ell$ is inert in $F$.  Similarly, if $\ell$
is ramified in $F$, say $\ell = {\mathfrak p}^2$, then $c(\chi_\ell)$ is twice the conductor of
$\chi_\ell = \chi_{\mathfrak p}: F_{\mathfrak p}^\times \to \C^\times$, though this case does not
occur by our assumption $(c,D) = 1$.
If $\ell = \mathfrak p_1 \mathfrak p_2$ is split in $F$, then we can write 
$\chi_\ell = \chi_{\mathfrak p_1} \otimes \chi_{\mathfrak p_2}$ with $\chi_{\mathfrak p_1}$,  
$\chi_{\mathfrak p_2}$ characters of $\Q_\ell^\times$, which
are inverses of each other on $\Z_\ell^\times$ as $\chi_\ell$ is trivial on $\Z_\ell^\times$.  
Hence $\chi_{\mathfrak p_1}$ and $\chi_{\mathfrak p_2}$ have the same
conductor, which is $c(\chi_\ell)$.  Consequently, $c(\chi)$, the absolute norm of the 
conductor of  $\chi$, is 
\begin{equation} \label{c-chi}
c(\chi) = \mathrm{Norm}\left(\prod_{\ell} \ell^{c(\chi_\ell)}\right)= \prod_{\ell} \ell^{2c(\chi_\ell)}.
\end{equation}

Note that since $(c,M) = 1$, we have $c(\chi_\ell) = 0$ whenever $\pi_\ell$ is ramified, i.e.,
the conductor $c(\pi_\ell) > 0$.  
If $c(\chi_\ell) = 0$, 
let $R_{\chi, \ell}$  be an Eichler order of reduced discriminant $\ell^{c(\pi_\ell)}$ in 
$M_2(\Q_\ell)$ containing $\mathcal O_{F,\ell}$.  If $c(\chi_\ell) > 0$, so $\pi_\ell$ is 
unramified, let $R_{\chi,\ell}$ be a maximal order of $M_2(\Q_\ell)$ which optimally contains
$\Z_\ell + \ell^{c(\chi_\ell)} \mathcal O_{F,\ell}$.   In either case, $R_{\chi,\ell}$ is unique up to conjugacy and pointwise
fixes a 1-dimensional subspace of $\pi_\ell$.  For $\ell < \infty$, take $\varphi_\ell \in
\pi_\ell^{R_{\chi, \ell}}$ nonzero, normalized in such a way that $\otimes' \varphi_\ell$ converges.
For instance, we can take $\varphi_\ell = \varphi_{\pi,\ell}$ at almost all $\ell$.
Each $\varphi_\ell$ is a local Gross--Prasad test vector, and our assumptions
imply that the local Gross--Prasad test vectors $\varphi_\ell$ are (up to scalars) translates
of the new vectors $\varphi_{\pi,\ell}$.  (Gross--Prasad test vectors are not
translates of new vectors in general.)

Embed $F$ into $\M_2(\Q)$ as follows.
Consider a quadratic form
\[ Q(x,y) = - \frac C2 x^2 + Axy + \frac B2 y^2 \in \mathcal F_{Dc^2}. \]
This means $Q$ is primitive of discriminant $Dc^2=A^2+BC$,
$2 \mid B$ and $2M \mid C$, which implies $A^2 \equiv Dc^2 \mod 4$.
Take the embedding of $F$ into $\M_2(\Q)$ induced by
$\sqrt Dc \mapsto \smallmat ABC{-A}$. 
Then $\mathcal O_c = R_0 \cap F$, and 
\[ F_\infty^\times = \left\{ g(x,y) := \bmx x + Ay & By \\ Cy & x-Ay \emx \in \GL_2(\R)  \right\}. \]
For a prime $\ell \nmid c$, we have ${\mathcal O}_{c,\ell} = {\mathcal O}_{F,\ell}
\subset R_{0, \ell}$.  Thus we may take 
$R_{\chi,\ell} = R_{0,\ell}$ for $\ell \nmid c$ such that $\chi_\ell$ is unramified---in 
particular, for $\ell \nmid cD$.  When $\chi_\ell$ is ramified, we may take
$R_{\chi,\ell} = R_{0,\ell}$ if and only if $c(\chi_\ell) = v_\ell(c)$.
By assumption, $c(\chi) = \prod_{\ell} \ell^{2c(\chi_\ell)} = c^2$, so we may take
$R_{\chi,\ell} = R_{0,\ell}$ at each finite place $\ell$.  Thus we may and will take
$\varphi_\ell$ to be $\varphi_{\pi, \ell}$ at each $\ell$.

Now we describe $\varphi_\infty$.
Note we can identify $F_\infty^\times/\Q_\infty^\times$ with $F_\infty^1/ \{ \pm 1 \}$,
where 
\[ F_\infty^1 = F_\infty^{1, +} \cup F_\infty^{1, -}, \quad
F_\infty^{1, \pm} = \{ g(x,y)  \in F_\infty^\times : \det g(x,y) = x^2-Dc^2y^2 = \pm 1 \}. \]
Let
\[ \gamma_\infty = \bmx A+\sqrt{D}c & A-\sqrt{D}c \\ C & C \emx. \]
Then
\[ \gamma_\infty^{-1} \bmx A & B \\ C & -A \emx \gamma_\infty = 
\bmx \sqrt Dc & 0\\0 & - \sqrt Dc \emx. \]
So
\[ \gamma_\infty^{-1} F^{1}_\infty \gamma_\infty = \left\{ \bmx x + y \sqrt Dc &0 \\0 & x-y \sqrt Dc \emx : x^2-Dc^2y^2 = \pm 1 \right\}. \]

 The maximal compact subgroup of $F^1_\infty$ is 
 \[ \Gamma_F = \gamma_\infty \left\{ \bmx \pm 1 &0 \\ 0&  \pm 1 \emx \right\} \gamma_\infty^{-1} 
 = \{ \pm I, \pm g(0, -(\sqrt D c)^{-1}) \}, \]
 where one reads the $\pm$ signs independently.
Let $U_\infty = \gamma_\infty \mathrm{O}(2) \gamma_\infty^{-1}$,
where $\mathrm{O}(2)$ denotes the standard maximal compact subgroup of 
$\GL_2(\R)$.  Then $U_\infty \supset \Gamma_F$, and the archimedean test vector in 
\cite{MW} is the unique up to scalars nonzero vector $\varphi_\infty$ lying
in the minimal $U_\infty$-type such that $\Gamma_F$
acts by $\chi_\infty$ on $\varphi_\infty$.
Specifically, we can take 
\begin{equation} \label{eq:phi-infty-sum}
\varphi_\infty = \pi_\infty(\gamma_\infty) (\varphi_{\infty,k} \pm \varphi_{\infty,-k}),
\end{equation} 
where $\varphi_{\infty,\pm k} = \frac 12 \pi_\infty \smallmat {\pm1}001\varphi_\pi$ 
is a vector of weight $\pm k$ in $\pi_\infty$, and the $\pm$ sign in \eqref{eq:phi-infty-sum}
matches the sign of $\chi_\infty \smallmat{-1}001$.
This completely describes the test vector $\varphi$ chosen in \cite{MW}.

For our purposes, we would like to work with a different archimedean component
than $\varphi_\infty$, corresponding to (a translate of) $\varphi_\pi$.  Let $\varphi^-$ be the pure tensor in $\pi$ which
agrees with $\varphi$ at all finite places, and is defined like $\varphi_\infty$ at
infinity
except using the opposite sign in the sum \eqref{eq:phi-infty-sum}.  
Then necessarily any 
$\chi_\infty$-equivariant linear function on $\pi_\infty$ kills $\varphi^-_\infty$, so
$P_\chi(\varphi^-) = 0$.  Hence $P_\chi(\varphi) = P_\chi(\varphi')$ where
$\varphi' = \varphi + \varphi^-$, and we can write $\varphi' = \otimes \varphi_v'$,
where $\varphi_\ell' = \varphi_\ell$ for finite primes $\ell$ and 
$\varphi'_\infty = \pi_\infty(\gamma_\infty)\varphi_\pi$, i.e.,   $\varphi'(x)=\varphi_\pi(x \gamma_\infty)$.

Finally, we describe the vector $\varphi_\pi'$ appearing in \eqref{eq:mw1}.  
It is defined to be a factorizable function in $\pi$ whose associated local Whittaker functions
are new vectors whose zeta functions are the local $L$-factors of $\pi$ at finite places,
and at infinity is the vector in the minimal ${\mathrm O}(2)$-type that transforms by $\chi_\infty$
under $\smallmat{\pm 1}{0}{0}{\pm 1}$ such that the associated Whittaker
function (restricted to first diagonal component) at infinity is 
$W_\infty(t) = 2\chi_\infty\smallmat{t}{0}{0}{1}|t|^{k/2} e^{-2\pi |t|}$.  
(This normalization gives $L_\infty(s,\pi) = \int_0^\infty W_\infty(t) |t|^{s-1/2} \, d^\times t$.)  
Thus $\varphi_{\pi}'$ agrees with $\varphi_\pi$ at all finite places
and $\varphi_{\pi,\infty}' = 2(\varphi_{\infty,k} \pm \varphi_{\infty,-k})$,
where the $\pm$ sign matches that in \eqref{eq:phi-infty-sum}.

Hence $\varphi = \frac 12 \pi(\gamma_\infty) \varphi'_\pi$, so by invariance of the inner product
we have $(\varphi, \varphi) = \frac 14 (\varphi'_\pi, \varphi'_\pi)$,
 and \eqref{eq:mw1} becomes 
\begin{equation} \label{eq:mw2}
\left| P_\chi(\varphi')\right|^2= \left| P_\chi(\varphi)\right|^2 =
L(\pi_F\otimes\chi,1/2)\cdot
\frac{1}{\sqrt{D}c}\cdot \prod_{\ell\mid c}\left(\frac{\ell}{\ell-\chi_D(\ell)}\right)^2.
\end{equation}

Now we want to rewrite $P_\chi(\varphi')$.
Recall that $\epsilon_c > 1$ is the smallest totally positive power of a fundamental unit in 
$\mathcal{O}_c^\times$. 
From \eqref{id-ring-class}, we obtain the isomorphism
\[  \A_\Q^\times F^\times \bs \A_F^\times / \hat{\mathcal{O}}_c^\times  
\simeq G^+_c \cdot(F^{+}_\infty / \langle \epsilon_c \rangle \Q_\infty^+)
 \simeq G^+_c \cdot(F^{1,+}_\infty / \langle \pm \epsilon_c \rangle). \]
We may identify
\[ F^{1,+}_\infty/ \langle \pm \epsilon_c \rangle = 
\left\{ \bmx x + Ay & By \\ Cy & x-Ay \emx \in \SL_2(\R) :  1 \le x + y \sqrt Dc < \epsilon_c \right\}, \]
and the orbit of $\gamma_\infty i$ in the upper half plane by this set is the geodesic
segment connecting $\gamma_\infty i$ to $\epsilon_c \gamma_\infty i$,
i.e., the image under $\gamma_\infty$ of $\{ i y : 1 \le y \le \epsilon_c^2 \} \subset
\mathcal H$.
Let us call this arc $\Upsilon$.

Since $\A_\Q^\times \subset F^\times \hat {\mathcal O}_c^\times F_\infty^+$ 
and $G_c^+ \simeq F^\times  \bs \A_F^\times/ \hat{\mathcal O}_c^\times F_\infty^+$ 
where $F_\infty^+ = (\R_{> 0})^2$, we see that $\chi$ is trivial on $\A_\Q^\times \hat {\mathcal O}_c^\times F_\infty^+$.  The Tamagawa measure gives
$\vol(F^\times \A_\Q^\times \bs \A_F^\times) = 2 L(1, \eta) = 4 h_F \log \epsilon_F 
\vol(\hat{\mathcal O}^\times)$, where $\eta$ is the quadratic character of 
$\A_\Q^\times / \Q^\times$  attached to $F/\Q$ and $\epsilon_F$ is the fundamental unit 
of $F$.  This implies $\vol(\A_\Q^\times F^\times \bs \A_F^\times / \hat{\mathcal{O}}_c^\times) = 
2 h_c^+ \mathrm{len}(\Upsilon)$, where $\mathrm{len}(\Upsilon) = 2 \log \epsilon_c$
is the length of $\Upsilon$ with respect to the usual hyperbolic distance.
Thus we compute
\begin{align*}
 P_\chi(\varphi') &= 2\vol(\hat {\mathcal{O}}_c^\times) \sum_{t \in G_c^+} \chi^{-1}(t)
\int_{F^{1,+}_\infty/ \langle \pm \epsilon_c \rangle} \varphi_\pi(tg \gamma_\infty) \, dg \\
&= 4 \vol(\hat {\mathcal{O}}_c^\times) \sum_{t \in G_c^+} \chi^{-1}(t)
\int_1^{\epsilon_c} j\left(t \gamma_\infty \smallmat u00{u^{-1}};i\right)^{k} 
f\left(t\gamma_\infty\smallmat u00{u^{-1}} \cdot i\right) \, d^\times u  \\
&= 2 \vol(\hat {\mathcal{O}}_c^\times) \sum_{t \in G_c^+} \chi^{-1}(t)
\int_1^{\epsilon_c^2} j\left(t \gamma_\infty \smallmat y001; i\right)^{k} 
f\left(t\gamma_\infty\smallmat y001 \cdot i\right) \, d^\times y,
\end{align*}
where we use that $f$ has trivial central character and the substitution $y=u^2$ at the last step.

For $\ell$ a rational prime dividing $c$, note that
${\mathcal O}_{F,\ell}^\times/ {\mathcal O}_{c,\ell}^\times \simeq
\Z_\ell^\times/(1+c\Z_\ell)$ when $\ell$ splits in $F$ and
${\mathcal O}_{F,\ell}^\times/ {\mathcal O}_{c,\ell}^\times \simeq
({\mathcal O}_{F,\ell}^\times / (1 + c {\mathcal O}_{F,\ell} )) /
(\Z_\ell^\times/(1+c \Z_\ell))$ when $\ell$ is inert in $F$.
Hence, with our choice of measures,
\[ \vol(\hat {\mathcal{O}}_c^\times) = \vol(\hat{\mathcal O}_F^\times) 
\prod_{\ell | c} [ {\mathcal O}_{F, \ell}^\times : {\mathcal O}_{c, \ell}^\times ]^{-1} = 
\frac 1{\sqrt D}
\prod_{\ell | c, \, {D \leg \ell} = 1} \frac 1{(\ell - 1) \ell^{v_\ell(c) - 1}} \cdot
\prod_{\ell | c, \, {D \leg \ell} = -1} \frac 1{(\ell + 1) \ell^{v_\ell(c)}}, 
\]
where $\ell$ runs over rational primes.

The next task is then to rewrite the integral appearing the in right hand side of the above formula. 
Let $z = \gamma_\infty iy$.  Then
\[ z = \frac AC + \frac{\sqrt Dc}C\left(1 - \frac 2{1+iy}\right). \]
Since
\[ \frac{2iy}{(1+iy)^2} = \frac 2{1+iy} - \frac 12 \left( \frac 2{1+iy} \right)^2
= \frac{BC + 2AC z - C^2z^2}{2Dc^2}, \]
we have
\[ j\left( \gamma_\infty \bmx y &0 \\ 0 &1 \emx; i\right)^{-2} = \frac{C(1+iy)^2}{2\sqrt Dc y} 
= \frac{2 i \sqrt Dc}{-Cz^2 + 2Az + B} = \frac{i\sqrt Dc}{Q(z,1)}, \]
and
\[ dz = \frac{2i y\sqrt Dc}{C (1+iy)^2} \,  d^\times y, \quad \text{i.e.} \quad 
d^\times y = \frac{2  \sqrt Dc}{-Cz^2 + 2Az + B} \,dz  = \frac{\sqrt Dc}{Q(z,1)} \, dz. \]
Making the change of variable $z = \gamma_\infty iy$, 
the above expression can be rewritten as
\[
P_\chi(\varphi')=\frac{2 \vol(\hat {\mathcal{O}}_c^\times)}{i^{k/2}\cdot (\sqrt Dc)^{(k-2)/2}}
\cdot\sum_{t \in G_c^+} \chi^{-1}(t)\int_\Upsilon f(tz)
\cdot Q(z,1)^{(k-2)/2} \, dz.\]
After another change of variable $z'=tz$, the above integral becomes
\begin{eqnarray*}
\int_{t\Upsilon}f(z') \cdot Q(t^{-1}z',1)^{(k-2)/2} \, dz'&=&
\int_{t\Upsilon}f(z')\cdot (Q|t^{-1})(z',1)^{(k-2)/2} \, dz'\\
&=&\int_{\tau_{t}}^{(t\epsilon_c t^{-1})\tau_t}f(z')\cdot (Q|t^{-1})(z',1)^{(k-2)/2} \, dz', 
\end{eqnarray*}
where $\tau_{t}=t{\gamma_\infty i}$. Now, as long as $t$ varies in $G_c^+$, 
the quadratic forms $Q|t^{-1}$ are representatives for the classes in $\mathcal{F}_{Dc^2}/\Gamma_0(M)$, 
as discussed in \S \ref{sec:optemb}.  Moreover, since
$\Upsilon$ is closed in $\mathcal H/\Gamma_0(N)$, this integral does not
depend on the choice of base point.  Plugging this into \eqref{eq:mw2} gives
the asserted formula.
\end{proof}

\subsection{Genus fields attached to orders} \label{sec:34}
Assume from now on that $c$ is odd. 
The \emph{genus field} attached to the order $\mathcal{O}_c$ of discriminant 
$Dc^2$ is the finite abelian extension of $\Q$, with Galois group isomorphic to 
copies of $\Z/2\Z$, contained in the strict class field $H_c^+$ of 
$F$ of conductor $c$ and generated by  
the quadratic extensions 
$\Q(\sqrt{D_i})$ and $\Q(\sqrt{\ell^*})$ where $D=\prod_iD_i$ is any possible 
factorization of $D$ into primary discriminants, 
$\ell\mid c$ is a prime number and $\ell^*=(-1)^{(\ell-1)/2}\ell$. 
See \cite[pp.\ 242-244]{cohn} for details. 

Fix a quadratic character $\chi:G_c^+\rightarrow\{\pm1\}$.

\begin{definition} 
We say that $\chi$ is \emph{primitive} if it does not 
factor through $H_f^+$ for a proper divisor $f\mid c$.
\end{definition} 

We assume that $\chi:G_c^+\rightarrow\{\pm1\}$ is primitive. 
By \eqref{c-chi}, this means $c(\chi) = c^2$.
Then $\chi$ 
cuts out a quadratic extension 
$H_\chi/F$ which, by genus theory for $\mathcal{O}_c$, is biquadratic over $\Q$.
Each quadratic extension of $\Q$ contained in the genus field of the 
order $\mathcal{O}_c$ is of the form $\Q(\sqrt{\Delta})$ for some 
$\Delta=D'\cdot\prod_{j=1}^s\ell_j^*$, with $\ell_j\mid c$ and 
$D'$ a fundamental discriminant dividing $D$. 
Write $H_\chi=\Q(\sqrt{\Delta_1},\sqrt{\Delta_2})$, with 
$\Delta_i=D_i\cdot\prod_{j=1}^{s_i}\ell_{i,j}^*$ for $i=1,2$ as above (so $\ell_{i,j}$ are primes 
dividing $c$), and let 
$K_1=\Q(\sqrt{\Delta_1})$ and $K_2=\Q(\sqrt{\Delta_2})$. 
Since the third quadratic extension contained in $H_\chi$ 
is the quadratic extension is $\Q(\sqrt{D})$,  
we have $\Delta_1\cdot\Delta_2\equiv D \cdot x^2$ for some $x \in \Q^\times$.
We can write $\Delta_1=D_1d$ and $\Delta_2=D_2d$ for some $d=\prod_{j=1}^s\ell_j^*$ with 
$\ell_i\mid c$ primes and $D=D_1\cdot D_2$ a factorization into fundamental discriminants,
allowing $D_1=D$ or $D_2=D$. 
If $d\neq \pm c$, then $\chi$ factors through the extension $H_d^+\neq H_c^+$
by the genus theory of the 
order of conductor $Dd^2$, and therefore $\chi$ is not  a primitive character of $H_c^+$. So $d=\pm c$. 
Thus we conclude that the quadratic fields $K_1=\Q(\sqrt{D_1d})$ 
and $K_2=\Q(\sqrt{D_2d})$
satisfy the following properties: 
\begin{itemize}
\item $D_1\cdot D_2=D$, where $D_1$ and $D_2$ are two coprime fundamental discriminants 
(possibly equal to $1$). 
\item $d=\pm c$ and $d$ is a fundamental discriminant. 
\end{itemize}

Let 
$\chi_{D_1d}$ and $\chi_{D_2d}$ be the quadratic characters attached to the 
extensions $K_1$ and $K_2$ respectively; thus 
$\chi_{D_1d}(x)=\left(\frac{D_1d}{x}\right)$ and $\chi_{D_2d}(x)=\left(\frac{D_2d}{x}\right)$.  
Similarly, let $\chi_D$ be the quadratic character attached to the extension $F/\Q$, i.e., 
$\chi_D(x)=\left(\frac{D}{x}\right)$. In particular, 
for all $\ell\nmid c$ we have 
\begin{equation}\label{characters}
\chi_D(\ell)=\chi_{D_1d}(\ell)\cdot\chi_{D_2d}(\ell).\end{equation}

Say that $\chi$ has sign $+1$ if $H_\chi/F$ is totally real, and sign $-1$ otherwise. 
If $\chi$ has sign $w_\infty\in\{\pm1\}$, put $I_f=I_f^{w_\infty}$ and 
$\Omega_f=\Omega_f^{w_\infty}$.  
Define 
\[\mathbb{L}(f,\chi):=\sum_{\sigma\in G_{{c}}^+}
\chi^{-1}(\sigma)I_f \{ \tau_0\rightarrow\gamma_{\psi_\sigma}(\tau_0) \}
\big(Q_{\psi_\sigma}(x,y)^{(k-2)/2}\big).\]

\begin{lemma}\label{lemmasign}
$\overline{\mathbb{L}(f,\chi)}=w_\infty\cdot\mathbb{L}(f,\chi)$. 
\end{lemma}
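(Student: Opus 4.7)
The plan is to analyze $\overline{\mathbb{L}(f,\chi)}$ termwise, using the compatibility between complex conjugation and the involution $\sigma\mapsto\sigma_F\sigma$ on $G_c^+$, where $\sigma_F$ is the class of $(\sqrt{D})$ in $\Pic^+(\mathcal{O}_c)$. As recalled in Section~\ref{sec:optemb}, this class acts on oriented embeddings via $\sigma_F\cdot[\psi]=[\psi^*]$ with $\psi^*:=\omega_\infty\psi\omega_\infty^{-1}$, so $\gamma_{\psi_{\sigma_F\sigma}}$ is $\Gamma_0(M)$-conjugate to $\omega_\infty\gamma_{\psi_\sigma}\omega_\infty^{-1}$. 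Since $\chi$ is quadratic, $\overline{\chi^{-1}(\sigma)}=\chi^{-1}(\sigma)$; combined with $\overline{\Omega_f^{w_\infty}}=w_\infty\Omega_f^{w_\infty}$ (using that $\Omega_f^+\in\R^\times$ while $\Omega_f^-\in(i\R)^\times$), the computation reduces to that of $\overline{\tilde I_f^{w_\infty}\{\tau_0\to\gamma_{\psi_\sigma}\tau_0\}(Q_{\psi_\sigma}^{(k-2)/2})}$ in terms of a value at a shifted index.

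The first step is to conjugate the underlying integral. Using that $f$ has real Fourier coefficients, $Q_{\psi_\sigma}$ has real coefficients, and that the substitution $z\mapsto-\bar z$ preserves $\mathcal{H}$, one obtains
\[
\overline{\int_{\tau_0}^{\gamma_{\psi_\sigma}\tau_0}f(z)Q_{\psi_\sigma}(z,1)^{(k-2)/2}\,dz}=(-1)^{(k-2)/2}\int_{-\bar\tau_0}^{\gamma_{\psi_\sigma^*}(-\bar\tau_0)}f(z)Q_{\psi_\sigma^*}(z,1)^{(k-2)/2}\,dz,
\]
invoking the polynomial identity $P(-x,1)=(P|\omega_\infty)(x,1)$ for $P$ of even degree, the endpoint identity $-\overline{\gamma_{\psi_\sigma}\tau_0}=\gamma_{\psi_\sigma^*}(-\bar\tau_0)$, and the relation $Q_{\psi_\sigma}|\omega_\infty=-Q_{\psi_\sigma^*}$. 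By the $\gamma_{\psi_\sigma^*}$-invariance of the integrand the right-hand side is a cycle integral, independent of the base point $-\bar\tau_0\in\mathcal{H}$, and by the equivalence $[\psi_\sigma^*]=[\psi_{\sigma_F\sigma}]$ in $\mathcal{E}(\mathcal{O}_c,R_0)/\Gamma_0(M)$ it equals the $(\sigma_F\sigma)$-term of the original sum. Passing to the projected symbol via $\tilde I_f^{w_\infty}|\omega_\infty=w_\infty\tilde I_f^{w_\infty}$, dividing by $\Omega_f^{w_\infty}$, and re-indexing the sum via $\tau=\sigma_F\sigma$ (which introduces the factor $\chi^{-1}(\sigma_F)=\chi(\sigma_F)$) produces the asserted identity up to the identification of signs.

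The final input is $\chi(\sigma_F)=w_\infty$, which follows from the adelic description of $G_c^+$ in Section~\ref{sec:34}: modulo $F^\times\hat{\mathcal{O}}_c^\times F_\infty^+$ the element $\sigma_F$ is represented by the idele trivial at the finite places and of signs $(+,-)$ at the two real places of $F$, so $\chi(\sigma_F)=\chi_\infty(1,-1)$; since $\chi_\infty(-1,-1)=1$ by the triviality of $\chi$ on $-1\in\mathcal{O}_c^\times$, we have $\chi_\infty(1,-1)=\chi_\infty(-1,1)$, and both equal $+1$ precisely when $\chi_\infty$ is trivial, i.e.\ when $H_\chi/F$ is totally real. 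The main obstacle is the careful bookkeeping of the four independent sign contributions --- $(-1)^{(k-2)/2}$ from $Q|\omega_\infty=-Q^*$, a factor $w_\infty$ from the $\omega_\infty$-eigenvalue of $\tilde I_f^{w_\infty}$, another $w_\infty$ from $\overline{\Omega_f^{w_\infty}}$, and $\chi(\sigma_F)=w_\infty$ from the character twist --- verifying that they combine to the single factor $w_\infty$, together with the verification that the base-point shift $\tau_0\mapsto-\bar\tau_0$ remains within $\mathcal{H}$ so the cycle-integral reformulation of the modular symbol stays well-defined.
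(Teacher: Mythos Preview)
Your approach is essentially the same as the paper's: both arguments establish that complex conjugation carries the term attached to $\psi_\sigma$ to the term attached to $\psi_\sigma^*\sim\psi_{\sigma_F\sigma}$, then re-index the sum to pick up $\chi(\sigma_F)$, and finally identify $\chi(\sigma_F)=w_\infty$. The paper compresses the first step to the single assertion $\overline{\Theta_\psi}=\Theta_{\psi^*}$ ``by direct computation'', whereas you expand the cycle-integral conjugation explicitly; and for $\chi(\sigma_F)=w_\infty$ the paper argues via whether $H_\chi\subset H_c$ while you use the adelic archimedean description --- both are valid.

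One caution on your sign bookkeeping: the four contributions you list in the final paragraph do not combine to $w_\infty$ as stated (their product is $(-1)^{(k-2)/2}w_\infty$), and the role you assign to the ``$\omega_\infty$-eigenvalue of $\tilde I_f^{w_\infty}$'' is unclear, since once the conjugated integral has been rewritten in terms of $\psi_\sigma^*$ there is no further $\omega_\infty$-action to invoke. In fact your displayed identity for $\overline{\int_{\tau_0}^{\gamma_{\psi_\sigma}\tau_0}\cdots}$ is off by a sign: the substitution $z\mapsto -\bar z$ introduces $d\bar z=-dw$, giving an overall factor $(-1)^{k/2}$ rather than $(-1)^{(k-2)/2}$. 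These signs do wash out correctly once one is careful --- the paper avoids the issue by working directly at the level of the normalized $K_f$-valued symbol $\Theta_\psi$ --- but your accounting as written should be tightened.
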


\begin{proof}
This follows from the discussion in \cite[\S 6.1]{Popa}. To simplify the notation, define 
\[ \Theta_\psi:= I_f \{ \tau_0\rightarrow\gamma_{\psi}(\tau_0) \}
\big(Q_{\psi}(x,y)^{(k-2)/2}\big),\]
which is independent of the choice of $\tau_0$ and the $\Gamma_0(M)$-conjugacy class of 
$\psi$. Let $z\mapsto\bar{z}$ denote complex conjugation. 
A direct computation shows that 
$\overline{\Theta_\psi}=\Theta_{{\psi}^*}$ where recall that 
${\psi}^*=\omega_\infty\psi \omega_\infty^{-1}$.  
From the discussion in \S\ref{sec:optemb} we have 
$\sigma_F\cdot[\psi]=[\psi^*]$, and it follows that 
$\overline{\Theta_\psi}=\Theta_{\sigma_F\psi}$. Taking sums over 
a set of representatives of optimal embeddings shows that 
$\overline{\mathbb{L}(f,\chi)}=\chi(\sigma_F)\cdot\mathbb{L}(f,\chi)$.
Let $H_\chi$ be the field cut out by $\chi$. The description of 
a system of representatives $\Sigma_c$ and $\Sigma_c^+$ of 
$\Gal(H_c/F)$ and $\Gal(H_c^+/F)$ in \S\ref{special-values} shows that 
if $\chi(\sigma_F)=1$ then $H_\chi$ 
is contained in $H_c$, and therefore $H_\chi$ is totally real. On the other hand, 
if $\chi(\sigma_F)=-1$, then $H_\chi$ cannot be contained in $H_c$, and therefore 
it is not totally real, so it is the product of two imaginary extensions. 
By definition of the sign of $\chi$, this means that 
$\mathbb{L}(f,\chi)$ is a real number when $\chi$ is even, 
and is a purely imaginary complex number when $\chi$ is odd, 
and the result follows.   
\end{proof}

Using the relation 
\[L(\pi_g\times\pi_\chi,1/2) = \frac 4{(2\pi)^k}\left(\left (\frac{k-2}{2}\right)!\right)^2 L(f/F,\chi,k/2),\] 
it follows from Lemma \ref{lemmasign} that 
Theorem \ref{thm:sv} can be rewritten in the following form: 
\begin{equation}\label{SV} 
L(f/F,\chi,k/2)=\frac{
(2\pi i)^{k-2}\cdot\Omega_{f}^2{\cdot w_\infty}}
{\left(\left(\frac{k-2}{2}\right)!\right)^2 \cdot \alpha^2 \cdot(Dc^2)^{(k-1)/2}
}\cdot\mathbb{L}(f,\chi)^2.
\end{equation} 

\begin{remark} By the lemma, the sign $w_\infty$ should also appear in equation (28) of 
\cite{BD-Rationality},
as the left hand side of that equation is not positive when $\chi$ is odd.  However, the
main result in \cite{BD-Rationality} still follows as this sign will cancel out with a sign arising from 
Gauss sums as in our argument below.
\end{remark}

\section{$p$-adic $L$-functions}

Recall the notation introduced in \S \ref{sec:2.4}:
$f_\infty$ is the Hida family
passing through the weight two modular form $f$ of level $N=Mp$ 
associated to the elliptic curve $E$ by modularity; $U$ is a connected 
neighborhood of $2$ in the weight space $\mathcal{X}$; $\mu_*^\pm$ 
is a measure-valued modular symbol satisfying the property that 
for all integers $k\in U$, $k\geq 2$, there is $\lambda^\pm(k)\in\C_p^\times$
such that $\rho_k(\mu_*^\pm)=\lambda^\pm(k)I_{f_k}^\pm$ and $\lambda^\pm(2)=1$.

\subsection{$p$-adic $L$-function of real quadratic fields} 
For any $Q\in \mathcal{F}_{Dc^2}$ and $\kappa\in U$,
define 
\[Q(x,y)^{(\kappa-2)/2}=\exp_p\left(\frac{\kappa-2}{2}\log_q\left(\langle Q(x,y)\rangle\right)\right)\]
where $\exp_p$ is the $p$-adic exponential and for $x\in\Q_p$, we let $\langle x\rangle$ denote the principal unit of $x$, satisfying 
$x=p^{\ord_p(x)}\zeta\langle x\rangle$ for a $(p-1)$-th root of unity $\zeta$. 
Recall the Hida family $f_\infty$ introduced in \S \ref{sec:2.4}.

\begin{definition}
Let $Q\in\mathcal{F}_{Dc^2}$ and let $\gamma_{\tau_Q}$ be the generator of the stabilizer 
of the root $\tau_Q$ of $Q(z,1)$, chosen as in Definition \ref{def:Darmon}.

\begin{enumerate} 
\item Let $r\in\mathbb{P}^1(\Q)$. The \emph{partial square root $p$-adic $L$-function} 
attached to $f_\infty$, a choice of sign $\pm$, and $Q$ is the function of $\kappa\in U$ defined by 
\[\mathcal{L}_p^\pm(f_\infty/F,Q,\kappa)=\int_{(\Z_p^2)'}Q(x,y)^{(\kappa-2)/2}\, d\mu_*^\pm\{r\rightarrow \gamma_{\tau_Q}(r)\}(x,y).\]
\item Let $\chi$ be a character of $G_c^+$. The  
\emph{square root $p$-adic $L$-function} attached to $f_\infty$, a choice of sign $\pm$, 
and $\chi$ is the function of $\kappa\in U$ defined by 
\[\mathcal{L}_p^\pm(f_\infty/F,\chi,\kappa)=\sum_{\sigma\in G_c^+}\chi^{-1}(\sigma)
\mathcal{L}_p^\pm(f_\infty/F,Q^\sigma,\kappa).\] 
\item The \emph{$p$-adic $L$-function} attached to $f_\infty$, the sign $\pm$, 
and $\chi$ is 
\[L_p^\pm(f_\infty/F,\chi,\kappa)=\left(\mathcal{L}_p^\pm(f_\infty/F,\chi,\kappa)\right)^2.\]
\end{enumerate}
\end{definition} 

Let $\chi:G_c^+\rightarrow\{\pm 1\}$ be a quadratic ring class character. 
Let $\epsilon$ be the sign of $\chi$ and set $w_\infty=\epsilon$. 
Denote $\mu_*=\mu_*^{w_\infty}$, 
$\Omega_{f_k}=\Omega^{w_\infty}_{f_k}$, 
$\lambda(k)=\lambda^{w_\infty}(k)$ and $L_p(f_\infty/F,\chi,k)=L_p^{w_\infty}(f_\infty/F,\chi,k)$. Recall the newform $f_k^\sharp$ 
whose $p$-stabilization is the weight $k$ specialization 
of the Hida family $f_\infty$ introduced in \S \ref{sec:2.4}.
Define the algebraic part of the central value of the $L$-function of 
the newform $f_k^\sharp$ twisted by $\chi$ to be 
\[L^\mathrm{alg}(f_k^\sharp/F,\chi,k/2)=
\frac{\left(\left(\frac{k-2}{2}\right)!\right)^2\sqrt{D}c}{(2\pi i)^{k-2}\cdot\Omega_{f_k^\sharp}^2}\cdot L(f_k^\sharp/K,\chi,k/2).\]

\begin{theorem} \label{p-adic-real}
For all integers $k\in U$, $k\geq 2$, we have 
\[L_p(f_\infty/F,\chi,k)={\lambda(k)^2\cdot{\alpha^{2}\cdot(1-a_p(k)^{-2}p^{k-2})^2}\cdot(Dc^2)^{(k-2)/2}}\cdot L^\mathrm{alg}(f_k^\sharp/F,\chi,k/2)\]
where the rational number $\alpha$ is defined in \eqref{alpha}. 
\end{theorem}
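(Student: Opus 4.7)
The plan is to specialize the square root $p$-adic $L$-function at an integer $k\geq 2$ in $U$, express the result in terms of classical modular symbols attached to $f_k^\sharp$, and then invoke the Shimura-style special value formula \eqref{SV} to obtain the algebraic $L$-value. Throughout I work with a fixed sign $w_\infty$ matching that of $\chi$.

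First I would fix a quadratic form $Q\in\mathcal{F}_{Dc^2}$ and analyze the partial square root $p$-adic $L$-function $\mathcal{L}_p(f_\infty/F,Q,\kappa)$ at $\kappa=k$. For integer $k\geq 2$ the function $Q(x,y)^{(k-2)/2}$ is a genuine polynomial in $P_{k-2}(\Q_p)$, so one expects to apply the specialization relation $\rho_k(\mu_*)=\lambda(k)I_{f_k}$. The subtlety is that $\rho_k$ is defined by integration over $\Z_p\times\Z_p^\times$, whereas the partial $L$-function integrates over the larger set $(\Z_p^2)'$ of primitive vectors. Decomposing $(\Z_p^2)'=(\Z_p\times\Z_p^\times)\sqcup(\Z_p^\times\times p\Z_p)$ and using the $\Gamma$-invariance of $\mu_*$ under a suitable upper-triangular matrix to move the second piece onto the first (much as in the $c=1$ case of \cite{BD-Rationality}), one extracts from the second contribution an Euler factor relating $f_k$ to the newform $f_k^\sharp$, namely $(1-a_p(k)^{-2}p^{k-2})$. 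The outcome should be an identity of the shape
\[\mathcal{L}_p(f_\infty/F,Q,k)=\lambda(k)\cdot(1-a_p(k)^{-2}p^{k-2})\cdot I_{f_k^\sharp}\{r\rightarrow\gamma_{\tau_Q}(r)\}\bigl(Q(x,y)^{(k-2)/2}\bigr),\]
the right-hand side being independent of the base point $r$.

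Next I would sum over $\sigma\in G_c^+$ with weights $\chi^{-1}(\sigma)$, and recognize the resulting classical sum as the quantity $\mathbb{L}(f_k^\sharp,\chi)$ introduced in \S\ref{sec:34} (using the bijection between $\mathcal{F}_{Dc^2}/\Gamma_0(M)$ and oriented optimal embeddings from \S\ref{sec:optemb}). Squaring gives
\[L_p(f_\infty/F,\chi,k)=\lambda(k)^2\cdot(1-a_p(k)^{-2}p^{k-2})^2\cdot\mathbb{L}(f_k^\sharp,\chi)^2.\]
Applying the special value formula \eqref{SV} to $\mathbb{L}(f_k^\sharp,\chi)^2$, and then unwinding the definition of $L^{\mathrm{alg}}(f_k^\sharp/F,\chi,k/2)$, one collects the factors $\alpha^2$ and $(Dc^2)^{(k-2)/2}$ exactly as stated. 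The sign $w_\infty$ appearing in \eqref{SV} poses no difficulty: the factor $\Omega_f^2\cdot w_\infty$ on the right of \eqref{SV} is real, matches the sign convention used to define $\mu_*$, and combines cleanly with $\mathbb{L}(f_k^\sharp,\chi)^2$ (which itself is real of sign $w_\infty$ by Lemma \ref{lemmasign}).

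The main obstacle I anticipate is the $p$-stabilization step: carefully tracking the contribution of the $\Z_p^\times\times p\Z_p$ piece of $(\Z_p^2)'$ so as to recover precisely the Euler factor $(1-a_p(k)^{-2}p^{k-2})$ rather than some variant, and doing so in a way that is uniform in $Q^\sigma$. This requires using the $U_p$-ordinarity of $\mu_*^\pm$ and the relation $f_k(z)=f_k^\sharp(z)-p^{k-1}a_p(k)^{-1}f_k^\sharp(pz)$ to identify the integral over primitive vectors with a modular symbol for the newform $f_k^\sharp$. Once this identification is in place, the rest of the argument is a bookkeeping exercise matching the Petersson period/algebraic $L$-value normalization of $L^{\mathrm{alg}}$ with the constants produced by \eqref{SV}.
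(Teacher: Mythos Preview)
Your proposal is correct and follows essentially the same route as the paper: specialize the partial $L$-function at $k$, extract the Euler factor $(1-a_p(k)^{-2}p^{k-2})$ relating $f_k$ to $f_k^\sharp$, sum over $G_c^+$ to obtain $\mathbb{L}(f_k^\sharp,\chi)^2$, and then invoke \eqref{SV}. The only difference is that where you sketch the decomposition of $(\Z_p^2)'$ and the ordinarity argument needed to produce the Euler factor, the paper simply cites \cite[Proposition 2.4]{BD-Rationality} for that step.
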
 

\begin{proof} 
By definition, 
\begin{eqnarray*}
\mathcal{L}_p(f_\infty/F,Q,k)&=&\int_{(\Z_p^2)'}Q(x,y)^{(k-2)/2} \, d\mu_*\{r\rightarrow \gamma_{\tau_Q}(r)\}(x,y)\\
&=& \lambda(k)(1-a_p(k)^{-2}p^{k-2})I_{f_k^\sharp}\{r\rightarrow\gamma_{\tau_Q}(r)\}(Q^{(k-2)/2}),
\end{eqnarray*}
where the last equality follows from \cite[Proposition 2.4]{BD-Rationality} and therefore we get, 
in the notation of Section \ref{sec:34},
\[L_p(f_\infty/F,\chi,k)= 
\lambda(k)^2(1-a_p(k)^{-2}p^{k-2})^2\cdot\mathbb{L}(f_k^\sharp,\chi)^2.\]
Using \eqref{SV} gives the result. 
\end{proof}

\subsection{Mazur--Kitagawa $p$-adic $L$-functions} 
Let $\chi:(\Z/m\Z)^\times\rightarrow\{\pm1\}$ be a primitive quadratic Dirichlet character
of conductor $m$. Suppose that 
$\chi(-1)=(-1)^{(k-2)/2}w_\infty$ and put $\Omega_{f_k}=\Omega_{f_k}^{w_\infty}$, 
$\lambda(k)=\lambda^{w_\infty}(k)$ and $\mu_*=\mu_*^{w_\infty}$. 
For $k\in U$ a positive integer define 
\[L^\mathrm{alg}(f_k^\sharp,\chi,k/2)=\frac{\tau(\chi)((k-2)/2)!}{(-2\pi i)^{(k-2)/2}\Omega_{f^\sharp_k}}L(f_k^\sharp,\chi,k/2)\] 
as the \emph{algebraic part} of the central special value of $L(f_k^\sharp,\chi,s)$, where $\tau(\chi)=\sum_{a=1}^m\chi(a)e^{2\pi i a/m}$ denotes the Gauss sum of the character $\chi$.  
The Mazur--Kitagawa $p$-adic $L$-function is defined as
\[L_p(f_\infty,\chi,k,s)=\sum_{a=1}^m\chi(pa)\int_{\Z_p^\times\times\Z_p^\times}\left(x-\frac{pa}{m}y\right)^{s-1}y^{k-s-1} \, d\mu_*\{\infty\rightarrow pa/m\}\]
and satisfies the following \emph{interpolation formula}: for all integers $k\in U$ with $k\geq 2$ we have 
\begin{equation}\label{MK}
L_p(f_\infty,\chi,k,k/2)=\lambda(k)(1-\chi(p)a_p(k)^{-1}p^{(k-2)/2})^2L^\mathrm{alg}(f_k^\sharp,\chi,k/2).\end{equation}

\subsection{A factorization formula for genus characters} 
Let $\chi:G_c^+\rightarrow \{\pm1\}$ be a primitive character, 
and let $\chi_{D_1d}:\Q(\sqrt{D_1d})\rightarrow\{\pm1\}$ and 
$\chi_{D_2d}:\Q(\sqrt{D_2d})\rightarrow\{\pm1\}$ be the associated quadratic 
Dirichlet characters. 

\begin{theorem}\label{thmfactor} The following equality 
\[L_p(f_\infty/F,\chi,\kappa)=
\alpha^{2}\cdot(Dc^2)^{(\kappa-2)/2}
\cdot L_p(f_\infty,\chi_{D_1d},\kappa,\kappa/2)\cdot L_p(f_\infty,\chi_{D_2d},\kappa,\kappa/2) \]
holds for all $\kappa\in U$, where the rational number $\alpha$ is
defined in \eqref{alpha}.\end{theorem}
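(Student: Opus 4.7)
The plan is to exploit the fact that both sides of the asserted identity are $p$-adic analytic functions of $\kappa$ on the neighborhood $U$ of $2$ (the left side by construction of the square-root $p$-adic $L$-function, and each Mazur--Kitagawa factor on the right by standard properties of $\mu_*^\pm$). By the $p$-adic identity principle, it therefore suffices to verify the equality at an infinite subset of $U$ with an accumulation point; the natural choice is $U \cap \Z_{\ge 2}$, since $U$ was taken to contain only integers $k \equiv 2 \pmod{p-1}$ and we want to use classical interpolation at such weights.

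For a classical weight $k \in U$ with $k \ge 2$, I would apply Theorem~\ref{p-adic-real} to the left-hand side, which rewrites it as
\[
\lambda(k)^{2} \cdot \alpha^{2} \cdot (1 - a_p(k)^{-2}p^{k-2})^{2} \cdot (Dc^{2})^{(k-2)/2} \cdot L^{\mathrm{alg}}(f_k^\sharp/F,\chi,k/2),
\]
and apply the Mazur--Kitagawa interpolation formula \eqref{MK} separately to each factor on the right-hand side. The product of $\lambda(k)$ terms gives $\lambda(k)^2$, matching the left-hand side. Comparing $p$-Euler factors: since $p$ is inert in $F$, $\chi_D(p) = -1$, so by \eqref{characters} the values $\chi_{D_1d}(p)$ and $\chi_{D_2d}(p)$ are $+1$ and $-1$ in some order, and the identity
\[
(1 - a_p(k)^{-2}p^{k-2})^{2} = \prod_{i=1,2}\bigl(1 - \chi_{D_id}(p)\, a_p(k)^{-1} p^{(k-2)/2}\bigr)^{2}
\]
takes care of that part.

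What remains is the algebraic $L$-value identity
\[
(Dc^{2})^{(k-2)/2} \cdot L^{\mathrm{alg}}(f_k^\sharp/F,\chi,k/2) = L^{\mathrm{alg}}(f_k^\sharp,\chi_{D_1d},k/2) \cdot L^{\mathrm{alg}}(f_k^\sharp,\chi_{D_2d},k/2).
\]
The complex-analytic input is the Artin-formalism factorization $L(f_k^\sharp/F,\chi,s) = L(f_k^\sharp,\chi_{D_1d},s)\,L(f_k^\sharp,\chi_{D_2d},s)$, which holds because genus theory for the order $\mathcal O_c$ (as developed in \S\ref{sec:34}) identifies the induced representation $\mathrm{Ind}_F^\Q \chi$ with $\chi_{D_1d} \oplus \chi_{D_2d}$ under the constraint $d = \pm c$, $D_1 D_2 = D$. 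Converting to algebraic parts, the transcendental factors must compare: on the left one has $\sqrt{D}c/((2\pi i)^{k-2} \Omega_{f_k^\sharp}^2)$, while on the right one has $\tau(\chi_{D_1d})\tau(\chi_{D_2d}) ((k-2)/2)!^{-2+2} / ((-2\pi i)^{k-2} \Omega_{f_k^\sharp}^{w_\infty^{(1)}} \Omega_{f_k^\sharp}^{w_\infty^{(2)}})$. The power-of-$2\pi i$ and factorial factors match; the Gauss sum product satisfies $\tau(\chi_{D_1d})\tau(\chi_{D_2d}) = \pm c\sqrt{D}$ via $\tau(\chi_{D_id})^2 = \chi_{D_id}(-1)\cdot D_id$, giving the $(Dc^2)^{(k-2)/2}$ factor after including the $\sqrt{Dc^2}$ from the Theorem~\ref{p-adic-real} normalization.

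The main obstacle is the bookkeeping of signs and periods. Because the real-quadratic $p$-adic $L$-function uses the single sign $w_\infty$ determined by $\chi$, whereas the two Mazur--Kitagawa factors use signs $w_\infty^{(i)}$ determined independently by the parities $\chi_{D_id}(-1)$, I need to verify that $w_\infty^{(1)} w_\infty^{(2)} = +1$ (which follows from $\chi_{D_1d}(-1)\chi_{D_2d}(-1) = \chi_D(-1) = +1$ as $F$ is real quadratic) and that the compatible sign choice makes $(\Omega_{f_k^\sharp}^{w_\infty})^2$ agree with $\Omega_{f_k^\sharp}^{w_\infty^{(1)}} \Omega_{f_k^\sharp}^{w_\infty^{(2)}}$ up to a controlled sign. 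That residual sign will then cancel against the sign $w_\infty$ in \eqref{SV} and the sign ambiguity in the Gauss sum product, exactly as flagged in the remark following Lemma~\ref{lemmasign}. Once this accounting is done, the equality at integer $k$ is established, and $p$-adic analytic continuation concludes the proof.
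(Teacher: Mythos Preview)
Your approach is essentially the same as the paper's: verify the identity at classical even weights $k\ge 4$ in $U$ by combining Theorem~\ref{p-adic-real}, the Mazur--Kitagawa interpolation formula~\eqref{MK}, the Artin-formalism factorization $L(f_k^\sharp/F,\chi,s)=L(f_k^\sharp,\chi_{D_1d},s)\,L(f_k^\sharp,\chi_{D_2d},s)$, and the Gauss sum identity $\tau(\chi_{D_1d})\tau(\chi_{D_2d})=w_\infty\sqrt{D}\,c$ (since the $D_id$ are fundamental discriminants), then extend to all of $U$ by density. One minor bookkeeping slip: the factor $(Dc^2)^{(k-2)/2}$ already appears on both sides (in Theorem~\ref{p-adic-real} on the left and explicitly in the target formula on the right), so it cancels and your ``remaining'' identity should read simply $L^{\mathrm{alg}}(f_k^\sharp/F,\chi,k/2)=L^{\mathrm{alg}}(f_k^\sharp,\chi_{D_1d},k/2)\,L^{\mathrm{alg}}(f_k^\sharp,\chi_{D_2d},k/2)$; and your worry about periods resolves because $\chi_{D_1d}(-1)=\chi_{D_2d}(-1)=w_\infty$ (both $D_id$ have the same sign, which is exactly the sign of $\chi$), so the same $\Omega_{f_k^\sharp}^{w_\infty}$ and $\lambda^{w_\infty}(k)$ are used throughout.
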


\begin{proof} Let $\chi_{D_i d}$ denote the quadratic characters 
associated with the extension $\Q(\sqrt{D_i d})$.
Since $p$ is inert in $F$, we have $\chi_D(p)=-1$, and
therefore from \eqref{characters} we get
\[\chi_{D_1d}(p)=-\chi_{D_2d}(p).\]
It follows that 
the Euler factor  $(1-a_p(k)^{-2}p^{k-2})^2$ appearing in Theorem \ref{p-adic-real} 
is equal to the product of the two Euler factors 
\[(1-\chi_{D_1d}(p)a_p(k)^{-1}p^{(k-2)/2})^2 \quad \text{and} \quad 
(1-\chi_{D_2d}(p)a_p(k)^{-1}p^{(k-2)/2})^2\]
appearing in \eqref{MK}. By comparison of Euler factors, we see that 
for all even integers $k\geq 4$ in $U$ we have 
\begin{equation}\label{FF}
L(f_k^\sharp/F,\chi,s)=L(f_k^\sharp,\chi_{D_1d},s)\cdot L(f_k^\sharp,\chi_{D_2d},s).\end{equation}
Therefore, from Theorem \ref{p-adic-real} and the factorization formula \eqref{FF} it follows that for all even integers $k\geq 4$ in $U$
the following factorization formula holds:  
\begin{equation}\label{eqfact}
L_p(f_\infty/F,\chi,k)=
\left(\frac{ \alpha^{2}\cdot \sqrt{D}c\cdot(Dc^2)^{(k-2)/2}{\cdot w_\infty}}{\tau(\chi_{D_1d})\cdot\tau(\chi_{D_2d})}\right)\cdot L_p(f_\infty,\chi_{D_1d},k,k/2)\cdot L_p(f_\infty,\chi_{D_2d},k,k/2).
\end{equation}
Since $D_id$ are fundamental discriminants, $\tau(\chi_{D_id})=\sqrt{D_id}$
(interpreting $\sqrt x$ as $i \sqrt{|x|}$ for $x < 0$),
so $\frac{\sqrt{D}c}{\tau(\chi_{D_1d})\cdot\tau(\chi_{D_2d})}={w_\infty}$, and the formula 
in the statement 
holds for all even integers $k\geq 4$ in $U$. 
Since $\Z\cap U$ is a dense subset of $U$, 
and the two sides of equation \eqref{eqfact} are continuous 
functions in $U$, they coincide on $U$. 
\end{proof}

\section{The main result}
Let the notation be as in the introduction: $E/\Q$ is an elliptic curve of conductor $N=Mp$ 
with $p\nmid M$, $p \ne 2$, and $F/\Q$ a real quadratic field of discriminant $D=D_F$ such that 
all primes dividing $M$ are split in $F$ and $p$ is inert in $F$. Finally, $c\in\Z$ is a positive 
integer prime to $ND$ and $\chi:G_c^+\rightarrow\{\pm\}$ is a primitive quadratic character of the strict 
ring class field of conductor $c$ of $F$. 
Let $w_\infty$ be the sign of $\chi$, and as above put $\mathcal{L}_p(f_\infty/F,Q,\kappa)=\mathcal{L}_p^{w_\infty}(f_\infty/F,Q, \kappa)$ and $L_p(f_\infty/F,\chi,\kappa)=L_p^{w_\infty}(f_\infty/F,\chi,\kappa)$.

We begin by observing that 
$\mathcal{L}_p(f_\infty/F,Q,2)=0$, since its value is $\mu_f\{r\rightarrow\gamma_{\tau_Q}(r)\}(\mathbb{P}^1(\Q_p))$, and the total measure of $\mu_f$ is zero. For the next result, 
let $w_M$ be the sign of the Atkin--Lehner involution acting on $f$. Also, let 
$\log_E:E(\C_p)\rightarrow\C$ denote the logarithmic map on $E(\C_p)$ induced from the 
Tate uniformization and the choice of the branch $\log_q$ of the logarithm fixed above. 

\begin{theorem}\label{final-1} 
For all quadratic characters $\chi:G_c^+\rightarrow\{\pm1\}$ we have 
\[\frac{d}{d\kappa}\mathcal{L}_p(f_\infty/F,\chi,\kappa)_{\kappa=2}=
\frac{1}{2}\left(1-\chi_{D_1d}(-M)w_M\right)\log_E(P_\chi), \]
where $P_\chi$ is defined as in \eqref{Pchi}.
\end{theorem}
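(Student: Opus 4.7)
The plan is to differentiate the defining integral $\mathcal L_p(f_\infty/F,Q,\kappa)$ at $\kappa=2$, split the resulting integrand via the factorization of $Q$ into linear factors, identify each piece with the formal group logarithm of (a Galois conjugate of) a Darmon point, and then extract the factor $(1-\chi_{D_1d}(-M)w_M)$ after summing against $\chi^{-1}$ and evaluating a genus character on the class of $(\sqrt D)$.

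First I would use
$\frac{d}{d\kappa}Q(x,y)^{(\kappa-2)/2}\big|_{\kappa=2}=\frac12\log_q\langle Q(x,y)\rangle$
and factor $Q(x,y)=A(x-\tau_Qy)(x-\bar\tau_Qy)$, where $\bar\tau_Q\in F$ is the other root. The constant piece $\log_q\langle A\rangle$ integrates to $\log_q\langle A\rangle\cdot\mathcal L_p(f_\infty/F,Q,2)=0$ by the vanishing remark preceding the theorem, and the first linear piece $\log_q\langle x-\tau_Qy\rangle$ integrates, by Theorem \ref{logtheorem}, to $\log_E(P_{\tau_Q})$. So the problem reduces to handling the second linear piece $\int\log_q\langle x-\bar\tau_Qy\rangle\,d\mu_*\{r\to\gamma_{\tau_Q}(r)\}$.

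Since $p$ is inert in $F$, the Frobenius $\tau_p\in\Gal(F_p/\Q_p)$ exchanges $\tau_Q\leftrightarrow\bar\tau_Q$, so for $x,y\in\Q_p$ we have $\log_q(x-\bar\tau_Qy)=\tau_p(\log_q(x-\tau_Qy))$. Because $\mu_*$ is $\Q_p$-valued with endpoints in $\mathbb P^1(\Q)$, both $\tau_p$-fixed, this converts the second piece into $\tau_p(\log_E P_{\tau_Q})$. Applying the real-conjugation formula \eqref{real-conj} and tracking the sign $-w_M$ through the Tate uniformization (where $\Phi_{\mathrm{Tate}}$ distributes over products in $\C_p^\times$ and sends $-1$ to a $2$-torsion point of $E$ with trivial formal logarithm), I expect to identify $\tau_p(\log_E P_{\tau_Q})$ with $-w_M\log_E(P_{\tau_Q^{\sigma_F}})$, where $\sigma_F\in G_c^+$ is the element attached in \S\ref{sec:2.3} to the class of $(\sqrt D)$ in $\Pic^+(\mathcal O_c)$. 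Summing $\chi^{-1}(\sigma)$ over $\sigma\in G_c^+$ and substituting $\sigma\mapsto\sigma\sigma_F^{-1}$ in the second piece then yields $\frac12\bigl(1-w_M\chi(\sigma_F)\bigr)\log_E(P_\chi)$.

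The final step is the genus-theoretic identity $\chi(\sigma_F)=\chi_{D_1d}(-M)$, carried out in the framework of genus theory of non-maximal orders from \S\ref{sec:34}. Writing $\chi$ as the character cut out by the biquadratic extension $H_\chi=\Q(\sqrt{D_1d},\sqrt{D_2d})$ with $D=D_1D_2$ and $d=\pm c$, the class of $(\sqrt D)$ in $\Pic^+(\mathcal O_c)$ is explicit, and one may read off $\chi(\sigma_F)$ using quadratic reciprocity together with the fact that $D_1d\cdot D_2d=Dc^2$ and that $p\nmid Dc$ is inert in $F$. The main obstacle is precisely this last identity: for $c=1$ it is the classical genus-character calculation used in \cite{BD-Rationality}, but the extension to $c>1$ requires handling contributions from the auxiliary quadratic extensions $\Q(\sqrt{\ell^*})$ for primes $\ell\mid c$ appearing in the genus field, together with careful bookkeeping of the sign $-w_M$ coming from \eqref{real-conj} and its transport through the Tate uniformization so that it matches the $w_M$ factor in the statement.
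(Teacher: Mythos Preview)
Your approach is essentially the paper's: differentiate under the integral, split $\log_q Q(x,y)$ into the two linear pieces (the leading coefficient $A$ drops out since the total measure is zero), recognize $\log_E(P_{\tau_Q})+\log_E(\tau_p P_{\tau_Q})$ via Theorem~\ref{logtheorem}, apply \eqref{real-conj}, and then sum against $\chi^{-1}$. The one place where the paper is terser than you is the identity $\chi(\sigma_F)=\chi_{D_1d}(-M)$: rather than redoing the genus-theoretic computation for non-maximal orders, the paper simply invokes \cite[Proposition~1.8]{BD-Rationality} and remarks that its extension to $c>1$ ``presents no difficulties''; your worry about the sign $-w_M$ through Tate uniformization is also unnecessary, since \eqref{real-conj} is already stated at the additive level $J_\tau=\log_q(J_\tau^\times)$, so $\log_E(\tau_p P_{\tau_Q})=-w_M\log_E(P_{\tau_Q^{\sigma}})$ directly.
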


\begin{proof}
We have  
\begin{eqnarray*}\frac{d}{d\kappa}\mathcal{L}_Q(f_\infty/F,\chi,\kappa)_{\kappa=2}&=&
\frac{1}{2}\int_{(\Z_p^2)'}(\log_q(x-\tau_Qy)+\log_q(x-\bar\tau_Qy)) \, d\mu_*\{r\rightarrow\gamma_{\tau_Q}(r)\}\\
&=&\frac{1}{2}(\log_E(P_{\tau_Q})+\log_E(\tau_pP_{\tau_Q})).
\end{eqnarray*}
By \eqref{real-conj}, 
$\tau_p(J_{\tau_Q})=-w_MJ_{\tau_Q^{\sigma_{\tau_Q}}}$ 
and by \cite[Proposition 1.8]{BD-Rationality} 
(whose extension to the present situation presents no difficulties) 
we know that $\chi(\sigma)=\chi_{D_1d}(-M)$, 
so the result follows summing over all $Q$.  
\end{proof}

\begin{theorem} Let $\chi$ be a primitive quadratic character of $G_c^+$ 
with associated Dirichlet characters $\chi_{D_1d}$ and $\chi_{D_2d}$. 
Suppose that $\chi_{D_1d}(-M)=-w_M$. Then: 
\begin{enumerate} 
\item There is a point $\mathbf{P}_\chi$ in $E(H_\chi)^\chi$ 
and $n\in\Q^\times$ such that $\log_E(P_\chi)=n\cdot\log_E(\mathbf{P}_\chi)$. 
\item The point $\mathbf{P}_\chi$ is of infinite order 
if and only if $L'(E/F,\chi,1)\neq 0$. 
\end{enumerate}\end{theorem}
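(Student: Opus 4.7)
The strategy follows \cite{BD-Rationality}: combine the derivative formula of Theorem \ref{final-1}, the identity $L_p(f_\infty/F,\chi,\kappa)=\mathcal{L}_p(f_\infty/F,\chi,\kappa)^2$, the factorization of Theorem \ref{thmfactor}, and the exceptional-zero formula of \cite{BD-HidaFamilies} (as extended by Mok \cite{Mok1}) applied to the two Mazur--Kitagawa $p$-adic $L$-functions that appear. The hypothesis $\chi_{D_1d}(-M)=-w_M$ makes the prefactor $\tfrac12(1-\chi_{D_1d}(-M)w_M)$ in Theorem \ref{final-1} equal to $1$, so
\[\tfrac{d}{d\kappa}\mathcal{L}_p(f_\infty/F,\chi,\kappa)\big|_{\kappa=2}=\log_E(P_\chi).\]
Since $\mathcal{L}_p(f_\infty/F,\chi,2)=0$ (the total measure of $\mu_f$ vanishes), differentiating $L_p=\mathcal{L}_p^2$ twice at $\kappa=2$ yields $\tfrac{d^2}{d\kappa^2}L_p(f_\infty/F,\chi,\kappa)\big|_{\kappa=2}=2\log_E(P_\chi)^2$, which I then compute a second way via the factorization.

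Writing
\[ L_p(f_\infty/F,\chi,\kappa)=\alpha^2(Dc^2)^{(\kappa-2)/2}\,g_1(\kappa)\,g_2(\kappa), \qquad g_i(\kappa):=L_p(f_\infty,\chi_{D_id},\kappa,\kappa/2), \]
and using that $p$ is inert in $F$ so $\chi_D(p)=-1=\chi_{D_1d}(p)\chi_{D_2d}(p)$, exactly one of the $g_i$ has an exceptional zero at $\kappa=2$; a sign-of-functional-equation computation shows that, under our hypothesis $\chi_{D_1d}(-M)=-w_M$, this factor is $g_1$ (equivalently, $\chi_{D_1d}(p)=a_p$). The squared Euler factor in the interpolation formula \eqref{MK} then shows that $g_1$ vanishes to order exactly $2$ at $\kappa=2$, whereas $g_2(2)=4\,L^{\mathrm{alg}}(E,\chi_{D_2d},1)$. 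Applying the Leibniz rule and using $g_1(2)=g_1'(2)=0$, the second derivative of the right-hand side of the factorization at $\kappa=2$ collapses to $\alpha^2\,g_1''(2)\,g_2(2)$.

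The key input is then the exceptional-zero formula of \cite{BD-HidaFamilies,Mok1}, which identifies $g_1''(2)$ with a nonzero rational multiple of $\log_E(\mathbf{Q}_\chi)^2$, where $\mathbf{Q}_\chi$ is a global Heegner point attached to $\chi_{D_1d}$, defined over the ring class field of $\Q(\sqrt{D_1d})$. Combining these formulas gives
\[ \log_E(P_\chi)^2 = m\cdot L^{\mathrm{alg}}(E,\chi_{D_2d},1)\cdot \log_E(\mathbf{Q}_\chi)^2 \qquad (m\in\Q^\times). \]
If $L(E,\chi_{D_2d},1)=0$ then $\log_E(P_\chi)=0$, so $P_\chi$ is torsion and any torsion $\mathbf{P}_\chi$ satisfies part~(1) with suitable $n$. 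Otherwise $L^{\mathrm{alg}}(E,\chi_{D_2d},1)\in\Q^\times$, and by transplanting $\mathbf{Q}_\chi$ through the biquadratic presentation $H_\chi=\Q(\sqrt{D_1d},\sqrt{D_2d})$ I produce $\mathbf{P}_\chi\in E(H_\chi)^\chi$ with $\log_E(P_\chi)=n\log_E(\mathbf{P}_\chi)$ for some $n\in\Q^\times$.

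Part~(2) follows from the classical factorization $L(E/F,\chi,s)=L(E,\chi_{D_1d},s)L(E,\chi_{D_2d},s)$: under our hypothesis the sign of the functional equation of $L(E,\chi_{D_1d},s)$ is $-1$, so $L(E,\chi_{D_1d},1)=0$ and
\[ L'(E/F,\chi,1)=L'(E,\chi_{D_1d},1)\cdot L(E,\chi_{D_2d},1). \]
By Gross--Zagier--Kolyvagin, the simultaneous nonvanishing of these two factors is equivalent to $\mathbf{Q}_\chi$ being of infinite order together with $L(E,\chi_{D_2d},1)\ne 0$, which is precisely the nontorsion condition on $\mathbf{P}_\chi$. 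I expect the hardest step to be realizing $\mathbf{P}_\chi$ directly in $E(H_\chi)^\chi$ with the correct Galois equivariance, rather than merely matching squares of logarithms; this will require a careful combination of the genus theory of $\mathcal O_c$ developed in Section \ref{sec:34} with the explicit Heegner-point input underlying \cite{BD-HidaFamilies,Mok1}, adapted to the ramified-conductor setting.
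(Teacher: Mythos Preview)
Your overall strategy matches the paper's proof closely: the same derivative computation from Theorem~\ref{final-1}, the same use of the factorization Theorem~\ref{thmfactor}, the same sign analysis pinning the exceptional zero on the $\chi_{D_1d}$-factor, and the same appeal to \cite{BD-HidaFamilies} and \cite{Mok1} for $g_1''(2)$. The structure is correct.

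There is, however, a genuine gap at the step you yourself flag as ``the hardest.'' From the squared identity
\[
\log_E(P_\chi)^2 \;=\; m\cdot L^{\mathrm{alg}}(E,\chi_{D_2d},1)\cdot \log_E(\mathbf{Q}_\chi)^2,\qquad m\in\Q^\times,
\]
you cannot pass to $\log_E(P_\chi)=n\,\log_E(\mathbf{P}_\chi)$ with $n\in\Q^\times$ unless $m\cdot L^{\mathrm{alg}}(E,\chi_{D_2d},1)$ is a square in $\Q^\times$. You diagnose the obstruction as one of Galois equivariance (``realizing $\mathbf{P}_\chi$ directly in $E(H_\chi)^\chi$''), but that is not where the difficulty lies: once $\mathbf{P}_{\chi_{D_1d}}\in E(\Q(\sqrt{D_1d}))^{\chi_{D_1d}}$ is in hand, it automatically sits in $E(H_\chi)^\chi$ via $\Q(\sqrt{D_1d})\subset H_\chi$. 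The real obstruction is arithmetic, and ``transplanting $\mathbf{Q}_\chi$ through the biquadratic presentation'' does nothing to resolve it.

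The paper closes this gap using the \emph{full} statement of \cite[Theorem~5.4]{BD-HidaFamilies}: the rational constant $\ell_1$ relating $g_1''(2)$ to $\log_E^2(\mathbf{P}_{\chi_{D_1d}})$ satisfies the congruence
\[
\ell_1\;\equiv\; L^{\mathrm{alg}}(f,\psi,1)\pmod{(\Q^\times)^2}
\]
for any Dirichlet character $\psi$ with $L(f,\psi,1)\neq 0$, $\psi(p)=-\chi_{D_1d}(p)$, and $\psi(\ell)=\chi_{D_1d}(\ell)$ for all $\ell\mid M$. Since $p$ is inert in $F$ and every $\ell\mid M$ splits in $F$, the character $\chi_{D_2d}$ is precisely such a $\psi$. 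Hence $\ell_1\equiv \ell_2\pmod{(\Q^\times)^2}$ with $\ell_2=L^{\mathrm{alg}}(E,\chi_{D_2d},1)$, so $\ell_1\ell_2\in(\Q^\times)^2$, and one may extract $t\in\Q^\times$ with $t^2=\ell_1\ell_2$ and set $n=\alpha\cdot t$. This is the missing ingredient in your argument; no further genus-theoretic or Heegner-point analysis is needed.
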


\begin{proof}
By Theorem \ref{final-1} we have 
\[\frac{1}{2}\frac{d^2}{d\kappa^2}L_p(f_\infty/F,\chi,\kappa)_{\kappa=2}=\log^2_E(P_\chi). \]
On the other hand, by the factorization of Theorem \ref{thmfactor} we have 
\[L_p(f_\infty/F,\chi,\kappa)=
\alpha^{2}\cdot(Dc^2)^{(k-2)/2}
\cdot L_p(f_\infty,\chi_{D_1d},\kappa,\kappa/2)\cdot L_p(f_\infty,\chi_{D_2d},\kappa,\kappa/2), \]
where the integer $\alpha$ is defined in \eqref{alpha}. 
Let $\mathrm{sign}(E,\chi_{D_id})=-w_N\chi_{D_id}(-N)$, where $w_N$ is the sign 
of the Atkin--Lehner involution at $N$. This is the sign of the functional equation of the complex 
$L$-series $L(E,\chi_{D_id},s)$. Since \[\chi_{D_1d}(-N)\cdot\chi_{D_2d}(-N)=\chi_D(-N)=-1, \]
we may order the characters $\chi_{D_1d}$ and $\chi_{D_2d}$ in such a way that
$\mathrm{sign}(E,\chi_{D_1d})=-1$ and 
$\mathrm{sign}(E,\chi_{D_2d})=+1$. So $\chi_{D_1d}(-N)=w_N$ and since 
$\chi_{D_1d}(-M)=-w_M$ it follows that $\chi_{D_1d}(p)=-w_p=a_p$. So the Mazur--Kitagawa $p$-adic $L$-function $L_p(f,\chi_{D_1d},\kappa,s)$ has an exceptional zero at $(\kappa,s)=(2,1)$, 
and its order of vanishing is at least $2$. 
We may apply \cite[Theorem 5.4]{BD-HidaFamilies}, \cite[Sec. 6]{Mok} and \cite[Theorem 3.1]{Mok1}, 
which show that there is a global point $\mathbf{P}_{\chi_{D_1d}}\in E(\Q(\sqrt{D_1c}))$ and a 
rational number $\ell_1\in\Q^\times$ such that 
\[\frac{d^2}{d\kappa^2}L_p(f_\infty,\chi_{D_1d},\kappa,\kappa/2)_{\kappa=2}=\ell_1\log_E^2(\mathbf{P}_{\chi_{D_1d}}),\] 
and this point is of infinite order if and only if $L'(E,\chi_{D_1d},1)\neq 0$. 
Moreover, $\ell_1\equiv L^\mathrm{alg}(f,\psi,1)$ mod $(\Q^\times)^2$ for any primitive Dirichler 
character $\psi$ such that $L(f,\psi,1)\neq 0$, $\psi(p)=-\chi_{D_1d}(p)$, 
and $\psi(\ell)=\chi_{D_1d}(\ell)$ for all  $\ell\mid M$. 
Now 
\[\ell_2=\frac{1}{2}L_p(f_\infty,\chi_{D_2d},2,1)=L^\mathrm{alg}(E,\chi_{D_2d},1)\] 
is a rational number which is non-zero if and only if $L(E,\chi_{D_2d},1)\neq 0$. 
In this case, $\ell_1\ell_2$ is a square: choose $t\in\Q^\times$ such that 
$t^2=\ell_1\ell_2$ if $\ell_2\neq 0$ and $t=1$ otherwise, and let 
$\mathbf{P}_\chi=\mathbf{P}_{\chi_{D_1d}}$ in the first case 
and 0 otherwise. Now the first part of the theorem follows 
setting $n=\alpha \cdot t$. 
Finally, for the second part note that  $L(E,\chi_{D_2d},1)\neq 0$ if and only if $L'(E/F,\chi,1)\neq0$
thanks to the factorization \eqref{FF}.
\end{proof} 

\bibliographystyle{amsalpha}
\bibliography{paper}
\end{document}